\newcommand{\dd }{\mathrm{d}}
 \journalname{Communications on Applied Mathematics and Computation}
\begin{document}

\title{Radon Measure Solutions to Riemann Problems for Isentropic Compressible Euler Equations of Polytropic Gases \thanks{This work was supported by the National Natural Science Foundation of China  under Grants No. 11871218, No. 12071298, and by the Science and Technology Commission of Shanghai Municipality  under Grant No. 18dz2271000.}\\ \footnotesize{Dedicated to Professor Tong Zhang on the occasion of his 90th birthday}
}
%\subtitle{Do you have a subtitle?\\ If so, write it here}

\titlerunning{Radon Measure Solutions to Riemann Problems}        % if too long for running head

\author{
Yunjuan Jin        \and Aifang Qu \and Hairong Yuan
         %etc.
}

\authorrunning{Jin, Qu and Yuan} % if too long for running head

\institute{Yunjuan Jin \at
              Center for Partial Differential Equations, School of Mathematical Sciences, East China Normal University, Shanghai 200241, China   \\
              \email{52185500019@stu.ecnu.edu.cn}
           \and
          Aifang Qu \at
              Department of Mathematics, Shanghai Normal University, Shanghai  200234,  China\\
    \email{afqu@shnu.edu.cn}
    \and
 Hairong Yuan * (Corresponding author) \at
              School of Mathematical Sciences and Shanghai Key Laboratory of Pure Mathematics and Mathematical  Practice, East China Normal University, Shanghai 200241, China  \\
              \email{hryuan@math.ecnu.edu.cn}
}

\date{Received: date / Accepted: date}
% The correct dates will be entered by the editor

\maketitle

\begin{abstract}
We solve the Riemann problems for isentropic compressible Euler equations of polytropic gases in the class of Radon measures, and the solutions admit the concentration of mass. It is found that, under the requirement of satisfying the over-compressing entropy condition:  (i) there is a unique delta shock solution, corresponding to the case that has two strong classical Lax shocks;  (ii) for the initial data that the classical Riemann solution contains a shock wave and a rarefaction wave, or two shocks with one being weak, there are infinitely many solutions, each consists of a delta shock and a rarefaction wave;  (iii)  there is no delta shocks for the case that the classical entropy weak solutions consist only of rarefaction waves. These solutions are self-similar. %Whether the delta shock solutions constructed satisfy the over-compressing entropy condition is clarified.
Furthermore, for the generalized Riemann problem with mass concentrated initially at the discontinuous point of initial data, there always exists a unique delta shock for at least a short time. It could be prolonged to a global solution.  Not all the solutions are self-similar due to the initial velocity of the concentrated point-mass (particle). Whether the delta shock solutions constructed satisfy the over-compressing entropy condition is clarified. This is the first result on the construction of singular measure solutions to the compressible Euler system of polytropic gases, that is strictly hyperbolic, and whose characteristics are both genuinely nonlinear. We also discuss possible physical interpretations and applications of these new solutions.

\keywords{Compressible Euler equations \and Radon measure solution \and delta shock \and Riemann problem \and non-uniqueness}
% \PACS{PACS code1 \and PACS code2 \and more}
\subclass{35L65 \and 35L67 \and 35Q31 \and 35R06 \and 35R35 \and 76N30 }
\end{abstract}

\section{Introduction}\label{sec1}

We consider the following one-space-dimensional unsteady isentropic compressible Euler equations of gas dynamics
\begin{equation}\label{1.1}
\left \{
\begin{split}
& \rho_{t}+(\rho u)_{x}=0,\\
&(\rho u)_{t}+(\rho u^{2}+p(\rho))_{x}=0.\\
\end{split}\right.
\end{equation}
Here  $t\ge0$ and $x\in\mathbb{R}$ are the independent time and space variables.
The unknown functions $\rho(x
,t)\geq 0,\  p(x,t)\geq 0,\ u(x,t)$ denote the mass density,  pressure, and velocity of the gas respectively. For polytropic gases,  the state equation $p=p(\rho)$  takes the form $p(\rho)\doteq\rho^{\gamma}$, with $\gamma\ge1$ being the adiabatic exponent.

It is well-known that in 1860, Riemann \cite{Riemann} pioneered the research of discontinuous solutions of hyperbolic conservation laws by studying the Cauchy problem for \eqref{1.1}, with initial data of the form
\begin{equation}\label{1.01}
  (u(x,0), \rho(x,0))=\begin{cases}
                          (u_1,\rho_1), & \mbox{if~$x<0$},\\
                          (u_2,\rho_2), & \mbox{if~$x>0$},
                        \end{cases}
\end{equation}
where $\rho_{1,2}>0$ and $u_{1,2}$ are all constants. He constructed piecewise smooth solutions  consisting of constant states, shock waves, and/or rarefaction waves. He also proposed an entropy condition (which is a special case of the Lax condition \cite[(8.3.1) in p.240]{Da}), to guarantee uniqueness of solutions in the class of piecewise-smooth functions, and the corresponding solutions are called {\it entropy solutions}. It turns out that this problem and its entropy solutions are fundamental for the mathematical theory and numerical computation of hyperbolic partial differential equations and their applications to aerodynamics and engineering. Hence, Cauchy problems like \eqref{1.1}\eqref{1.01} with piecewise constant initial data are named as ``Riemann problems". In this paper, we will further look for solutions to \eqref{1.1}\eqref{1.01} in the class of Radon measures, which admit concentration of mass in the $(x,t)$-physical space.
For convenience of comparison, we list the entropy solutions obtained by Riemann in the Appendix of this paper. One may also refer to \cite{Chang1989The,smoller1994shock} for more details on the classical theory.

Why do we study Radon measure solutions to the Riemann problem \eqref{1.1}\eqref{1.01}? To answer this question, we briefly review, best to our knowledge, some development on the mathematical theory of hyperbolic conservation laws and compressible Euler equations.

Up to now, by using Riemann problems as building blocks, a complete mathematical theory on well-posedness and asymptotic behavior of entropy weak solutions of strictly hyperbolic conservation laws in a single space variable with genuinely nonlinear or linearly degenerate characteristics has been established for initial values with small total variations (cf. \cite{bressan,Da,holden}). However, difficulties related to concentration may arise, due to failure of strict hyperbolicity, or problems associated with large initial data (cf. Remark 3 in \cite{bressan2011}  and \cite[Section 9.6]{Da}). Certain Riemann problems for hyperbolic conservation laws do not have  solutions in the class of locally Lebesgue-integrable functions, when the two constant states are not close; for a number of models where strict hyperbolicity fails or all the characteristics are linearly degenerate, the solutions with bounded initial value may lie in the larger space of measures (see, for instance,    \cite{brenier2005solutions,Daw2016Shadow,guo2010chaplygin,Jiang2016Developing,Jiang2021The,Jin2019On, Keyfitz1999Conservation,Liu2020Riemann, Nedeljkov2004Delta,shen2011globalstructure,Shen2016The,Tan1994Two,Tan1994Delta,Yang2012New}).

A non-strictly hyperbolic system requiring measure solutions was firstly reported by Korchinski \cite{Korchinski1977Solution} in 1977:
\begin{align}\label{eq12}
  u_t+(\frac12u^2)_x=0,\qquad v_t+(\frac12 uv)_x=0.
\end{align}
Motivated by numerical results, he extended the class of admissible solutions to that of ``$\delta$-solution" --- ``a distribution which is the sum of a function, continuous except on a finite set of curves, and one or several {\it generalized $\delta$-functions}, and which satisfies the integral form of the partial differential equations (in conservation form), and the initial conditions. If no $\delta$-function term appears, then to be admissible the solution must be of classical weak solution" (\cite[p.30]{Korchinski1977Solution}).
He solved Riemann problems for \eqref{eq12} in this class when $u$ develops shocks and then $v$ contains Dirac measures supported on space-time lines. Notice that the ``generalized delta functions" were introduced by him to overcome the difficulty of multiplication of a Dirac measure and a discontinuous function appearing in the term $\frac12 uv$. This is always a key point to understand  how a measure could satisfy a nonlinear partial differential equation in a reasonable sense.  We remark that the Radon measure solutions we seek to \eqref{1.1}\eqref{1.01} still lie in the class defined by Korchinski, while we use the Radon-Nikodym derivatives of measures instead of generalized delta functions to avoid any confusion.

Being not aware of Korchinski's unpublished work (see \cite[p.2]{Tan1994Delta}), in 1990s, Tan and Zhang reported in \cite[p.247]{Tan1994Two} and \cite{Chang1991Two} their discovery of necessity of introducing Dirac measures when solving the two-dimensional Riemann problems for the system
\begin{align}\label{eq13}u_t+(u^2)_x+(uv)_y=0, \qquad v_t+(uv)_x+(v^2)_y=0,\end{align}
whose hyperbolicity fails on a curve in the $(u,v)$-state space. They also coined the now well-accepted term ``delta shock" ($\delta$-shock, see \cite[p.247]{Tan1994Two} and \cite[p.289]{Da}). It is remarkable that, like Riemann, who derived the Rankine-Hugoniot conditions and entropy condition of \eqref{1.1} without a rigorous mathematical definition of weak solutions,  Tan and Zhang, with their great intuition, obtained the generalized Rankine-Hugoniot conditions
\cite[p.245]{Tan1994Two} and the $\delta$-entropy condition \cite[p.246]{Tan1994Two} --- ``all of the characteristic curves on both sides of the discontinuity curve are incoming at every point on the discontinuity", also without a definition of measure solutions to \eqref{eq13}. They also showed that delta shock is well-posed in the sense that if the Riemann initial data admit a classical solution, then they admit no delta shock solution, and vice-versa (p.247). Tan, Zhang, and Zheng justified delta shock of \eqref{eq12} by viscous approximation in \cite{Tan1994Delta}.

Keyfitz and Kranzer had studied Riemann problems for the system
\begin{align}\label{eq14}u_t+(u^2-v)_x=0, \qquad v_t+(\frac13 u^3-u)_x=0,\end{align}
which is strictly hyperbolic and genuinely nonlinear \cite[p.421]{Keyfitz1995Spaces}. They found that for some ``large data", there is no classical Riemann solutions.
By constructing approximate solutions via the artificial self-similar viscosity approximation developed by Tup\u{c}iev \cite{Tupciev1973The} and Dafermos \cite{Dafermos1973Solution}, they showed that the approximate solutions have a limit in some topology, and called  such a limit a singular shock. It is not clear how to give a sense in which the limit measures satisfy the equations, perhaps due to the nonlinearity in $u$.

Since the equations \eqref{eq12}-\eqref{eq14} are somewhat artificial, the more physically significant pressureless Euler equations (i.e., \eqref{1.1} with $p(\rho)\equiv0$) attract great attention to study measure solutions. It is indicated to explain the formation of large-scale structure in the universe.  The characteristics of this system are coincident, thus it is not strictly hyperbolic.
{ Due to its special structure, its measure solutions could be defined through distribution, with Lebesgue measure of physical space replaced by the mass measure,} %It has a special structure and measure solutions could be defined through the integration-by-parts, and then replace the Lebesgue measure of {\red physical} space by the measure of mass density,
see, for example \cite[Section 3.3]{lizhangyang}, and the generalized Rankine-Hugoniot conditions could be derived rigorously.  Bouchut \cite{Bouchut1994On} proposed {\red the} Borel measure solutions and showed existence of Riemann solutions containing delta shocks by viscosity approximation. Then E, Rykov, and Sinai  \cite{E1996Generalized} constructed a global weak solution to the Cauchy problem with density being a Radon measure, by making use of generalized variational method. They also proved that a convex entropy-entropy flux pair is not sufficient to single out all non-physical solutions for the pressureless flow. Cheng, Li, and Zhang \cite{chenglizhang1997} and Li, Warnecke \cite{liw2003} firstly established well-posdness of measure solutions by the method of generalized characteristics.
Wang, Huang, and Ding \cite{ding1997cauchyproblemoftransportationequations}
established the existence for general velocity by employing  Lebesgue-Stieltjes integral and the potential function similar to \cite{E1996Generalized}.
Huang and Wang proved  uniqueness of the measure solution under the Oleinik entropy condition and the energy condition for initial data belonging to the space of Radon measures \cite{Huang2001Well}. %See \cite{chenglizhang1997,liw2003} for similar well-posedness results but with different approaches by generalized characteristics.

The pressureless Euler equations also describe the dynamics of sticky particles, which provides a different approach to construct measure solutions through interacting discrete particles and optimal transport methods. Since these methods and the motivations are quite different from the present paper, we will not review them in detail, but just recommend   \cite{Brenier1998Sticky,Natile2009A,Cavalletti2015A,Nguyen2015One,Hynd2018Probability} to the interested readers.

Except the aforementioned general mathematical theories, to justify the mathematical concept and physical implications of delta-shocks, there are numerous literatures on the analysis of approximate problems and convergence of their solutions. One specific direction of study is the vanishing viscosity method as mentioned above \cite{Bouchut1994On,Boudin2000A,Tan1994Delta,Li2001Delta}. The other is on flux approximation. Li \cite{Li2001Note} firstly proved that delta shock appears as the temperature goes to zero for the Riemann problem \eqref{1.1}\eqref{1.01}, if it admits two shocks. Chen and Liu  \cite{Chen2003Formation,Chen2004Concentration} justified similar results for the vanishing pressure limit. See also \cite{Hu1997A,Sheng1999The,Yang1999Riemann,Ercole2000Delta,Cheng2008Two} for more related works.

The Euler equations \eqref{1.1} of the Chaplygin gas (i.e. $p(\rho)=-1/\rho$) provide a prototype of a strictly hyperbolic system of conservation laws with linearly degenerate characteristics that admits delta shocks. Brenier \cite{brenier2005solutions} investigated the Riemann problem and constructed delta shock  with a mass concentration located in the moving point of discontinuity.  Dozens of articles were devoted to the study of delta shocks of Chaplygin gas and modified Chaplygin gases, see, for example, \cite{guo2010chaplygin} { and those cited it}.%为了和前面dozens of配起来

There are many other ways to define measure solutions. For example, Bouchut and James \cite{BJ1997} employed a duality method to solve linear transport equations for BV functions, and the linear continuity equations for measures. For equations in nonconservative form, one may also consult \cite{LeFloch1990} for ideas on definitions of products of measures and discontinuous functions.  The methods of shadow waves, split $\delta$-functions, Colombeau's generalized functions, as well as Sarrico's $\alpha$-product framework,  could be found in  \cite{Colombeau1992Multiplication,Nedeljkov2002Unbounded,Nedeljkov2004Delta, Nedeljkov2008Interactions,Paiva2020Formation}. From these works, we notice that an underlying philosophy is this: {\it to solve a Riemann problem not solvable in the usual sense of functions with minimal waves, which means that one has an over-determined (usually algebraic) problem. Then one adds some extra structure in the solution, thus has more freedom to get a solvable system.} For example, introducing a delta shock in the flow field brings some weights which are time-dependent functions to be solved; the shadow wave method adds extra narrow regions in the Riemann solution, such as two constant intermediate states for a $2\times 2$ system, rather than one for a classical Riemann solution, etc. It is obvious that adding more freedom, there will be more solutions. So in the construction of delta shocks, we shall always try to {\it minimize the total number of waves}. The crucial point left is how to give a reasonable sense, i.e., definition, that the extra structure satisfies the differential equations. To judge whether a definition of (measure, or generalized) solution is acceptable and useful, apart from its mathematical preciseness and theoretical consistency, from applied mathematics points of view, one also wishes to obtain from the definition some  significant results that were demonstrated by physical experiments or engineering practice.

To tackle real physical problems, one usually needs to study initial-boundary value problems, rather than the Cauchy problems, for which all the works mentioned above were devoted to. In 2018, motivated by the paper \cite{hu2018} authored by Hu, Qu, Yuan, and Zhao started to study the problem of hypersonic-limit flow passing straight wedges \cite{Qu2020Hypersonic}. It turns out that under suitable scaling, the hypersonic limit is exactly the vanishing pressure limit, and the hypersonic-limit flow is actually the well-studied pressureless Euler flow. The authors proposed a rigorous definition of Radon measure solutions of a boundary value problem for the two-space-dimensional stationary compressible Euler equations with general state equations. The new definition employs the Radon-Nikodym derivatives of absolutely continuous measures, rather than the integration of velocity with respect to the mass density measure used in the previous works, thus eliminates the confusion that whether a discontinuous function is integrable with respect to a general Borel measure. The authors also showed that as the Mach number of upcoming flow increases to infinity, the Lax shock-front ahead of the wedge approaches the wedge, and the classical Riemann solutions converge weakly as measures to a singular Radon measure solution.  What's more, as a by-product of the limiting Radon measure solution, for which mass concentrates on the wedge, one obtains the Newton's sine-squared pressure law, which is a fundamental formula for hypersonic aerodynamics \cite[Section 3.2]{Anderson2006Hypersonic}.

It turns out that the definition presented in \cite{Qu2020Hypersonic} is rather flexible, as it could be used to deal with quite different and more difficult problems. By the definition of Radon measure solutions, we proved the Newton-Busemann law for hypersonic flow passing curved wedges and cones, and obtained formulas not known before for pressure distributions on bodies in hypersonic flows\cite{quyuan2021}; we studied interactions of delta shocks leaving finite obstacles and discovered the extinction of delta shocks, which was not reported before, see \cite{Jin2019On,Jin2021Radon,Qu2020Radon,Qu2020Hypersonic,Qu2021Radon}. In \cite{Qu2020Measure,Qu2020High}, the authors also studied the high-Mach number limit of piston problems for the Euler equations of polytropic gases and the Chaplygin gas, and measure solutions with density concentrated on pistons were constructed.

The studies of hypersonic-limit flows and piston problems show that all the previous works on measure solutions are not only of mathematical curiosity, but also closely connected to significant physical phenomena and hypersonic aerodynamics. It is somewhat out of expectation that, to our knowledge, priori to \cite{Qu2020Hypersonic,Jin2019On,Qu2020Radon,Qu2020Measure,Qu2020High},  there is no research paper on measure solutions to initial-boundary-value problems for hyperbolic conservation laws. (It is noted that until recently, Neumann and Oberguggenberger \cite{Neumann2021Initial} studied measure-valued solutions to an initial-boundary-value problem for the one-space-dimensional pressureless Euler equations, by using the method of \cite{Huang2001Well}.)

Being confident with the above concept of Radon measure solutions, we turn to the classical Riemann problem \eqref{1.1}\eqref{1.01} for polytropic gases, wondering whether it has delta shock solutions. The point is that, to our knowledge, no delta shock solution has ever been constructed for a strictly hyperbolic system of conservation laws whose characteristic families are all genuinely nonlinear. The existence or non-existence will definitely help us to understand better the physics of delta shocks, as well as to what extent the compressible Euler equations are valid.

Theorem \ref{thm3.1} in this paper (see Section \ref{s3.1}) shows that, roughly speaking: ~(i)~for most of the Riemann data that produce two strong classical Lax shocks, there is a unique admissible delta-shock solution; ~(ii)~for the Riemann data that the classical Riemann solution contains a shock wave and a rarefaction wave, or two shocks with one being rather weak, there are infinitely many solutions, each of which consists of a delta shock and a rarefaction wave;  ~(iii)~there is no admissible delta shocks for the case that the classical entropy weak solutions  consist only of rarefaction waves. These new solutions satisfy the over-compressing entropy condition (see \eqref{2.66}), and the delta shocks are straight lines in the $(x,t)$-physical plane.

We also solve the generalized (or singular) Riemann problem for \eqref{1.1}, which means the initial data are
\begin{equation}\label{1.2}
\left \{
\begin{split}
&\varrho_0=\rho_{1}\mathcal{L}^{1}\lfloor\{x<0\}+\rho_{0}\delta_{\{x=0\}}+\rho_{2}\mathcal{L}^{1}\lfloor\{x>0\},\\
&u_0(x)=u_{1}\mathsf{I}_{\{x<0\}}+u_{0}\mathsf{I}_{\{x=0\}}+u_{2}\mathsf{I}_{\{x>0\}}, \\
\end{split} \right.
\end{equation}
where $u_0(x)$ is considered as a function measurable with respect to the Radon measure $\varrho_0$, which is the sum of a  Dirac measure $\delta_{\{x=0\}}$ supported at the origin, and the standard  Lebesgue measure $\mathcal{L}^{1}$ on $\mathbb{R}$, restricted to the positive/negative real axis. We use $\mathsf{I}_{A}$ to denote the indicator function of a set $A$, and $m\lfloor A$ is the measure obtained by restricting a measure $m$ on an $m$-measurable set $A$. See Theorem \ref{thm4.1} for the complete results, which in particular imply local existence of a delta shock if the initial concentration $\rho_0$ is positive.

Since the classical Riemann solutions (see Appendix) had been thoroughly examined mathematically and physically for over a century, one would argue that the obtained delta shocks are extraneous, as it is well-known that there are too many weak
solutions to conservation laws but physically meaningless. However, in a conversation with Professor Jiequan Li, he suggested a natural connection between delta shocks and the free pistons which could absorb or release gases while the gases pushing it at the two sides. The identification of a delta shock as a free piston had been verified for the pressureless gas \cite{Gao2021Free}. In Remark \ref{rmk4new}, we show that from the generalized Rankine-Hugoniot conditions of delta shocks, one could derive the movement of a classical piston that neither absorbs nor releases gas all the time. The key observation is, {\it using the concept of delta shocks, we could transform a complicated solid-fluid interaction problem, which consists of coupled initial boundary value problems (modeling the gas) and moving boundaries (describing the pistons), to a single Cauchy problem, and treat the large scale of fluids and small scale of particles/pistons in a unified way. This new approach may significantly reduce expenses on analysis and computations.}  Also, our results indicate that a delta shock (i.e., a suitable piston absorbs gases) provides a way to eliminate shock waves in the polytropic gases, which helps to smooth up flow fields, and might have some applications in control and engineering, such as wind tunnels.

Alerted readers wonder that how to understand the term $p(\rho)=\rho^\gamma$ for a delta shock solution to \eqref{1.1}, in which $\rho$ contains a Dirac measure. This term does not exit for the case of pressureless flows, or the Chaplygin gas. For the latter, it is natural to take $p(\rho)=-1/\rho$ to be zero on the support of Dirac measures contained in $\rho$. So what about the case $\gamma>1$? This is a long standing obstruction to accept delta shock solutions for general Euler equations. However, to our experience, this is a spurious concern. The previous studies of physical problems demonstrated  that the state equation $p(\rho)=\rho^\gamma$ is useless when $\rho$ concentrates; we can determine a unique delta shock with all the other conditions in the definition of Radon measure solutions. Intuitively speaking, concentration of mass for the isentropic Euler equations \eqref{1.1} is not a thermotic, but a kinetic phenomena. Therefore, defining pressure for delta shocks (concentrated mass, or particles) is meaningless for this simplified model of isentropic gas.

It is remarkable that by the definition of Radon measure solutions, similar analysis as that in this paper  can also be undertaken for the general case that on the left of the delta shock, the state equation of the gas is $p(\rho)=A_{1}\rho^{\gamma_{1}}$, and on the right, it is $p(\rho)=A_{2}\rho^{\gamma_{2}}$, with different constants $A_{1}, A_{2}$ and $\gamma_{1}, \gamma_{2}$. This is a special type of multi-phase flow, or, mathematically speaking, a hyperbolic system of conservation laws with discontinuous fluxes. The result will be reported in another work. Interested readers may also see \cite[Chapter 8]{holden} and \cite{gueshen} for theory of scalar conservation laws with discontinuous fluxes and \cite{lefloch2007} for applications. No delta shocks had been considered for such problems before.

At the end of this introduction, we describe briefly the structure of the paper.
In Section \ref{s2}, we firstly present a definition of Radon measure solutions to a linear scalar conservation law in a very natural way, and derive the generalized Rankine-Hugoniot conditions of delta shock solutions. Then the definition of Radon measure solutions to the compressible Euler system is followed, based on the further idea of nonlinear algebraic constraints on the Radon-Nikodym derivatives of certain measures. In Section \ref{sec3}, we characterize the subset of the $(u,\rho)$-phase plane in which the right state can be connected to the given left state by a single delta shock. Unlike standard shocks, such a set is not a curve, but regions bounded by two curves in the phase plane. The over-compressing entropy condition is checked for these delta shocks, as well as the impact on the profile of a delta shock by its initial velocity $u_0$ of concentrated mass. The last Section \ref{s3} is devoted to {the} solvability of Riemann problems with general initial states.  We construct solutions to \eqref{1.1}\eqref{1.01} in Section \ref{s3.1}, and compare them with the classical Riemann solutions with the same initial data. See Figure \ref{fig7} and Table \ref{tab1}.  In Section \ref{s3.2}, we construct Radon measure solutions to \eqref{1.1}\eqref{1.2} for $\rho_0>0$. The solutions  are in general no longer self-similar. There is mass concentration for at least a short time,  no matter what the left and right initial states are, which is different from the case $\rho_0=0$. The solutions depend also on the initial velocity $u_0$ of the concentrated mass. The main results of this paper is Theorems \ref{thm3.1} and \ref{thm4.1}.
The classical Riemann solutions to \eqref{1.1}\eqref{1.01} are listed in Appendix.

\section{Radon measure solutions to Euler equations}\label{s2}

In this section, after presenting a definition of Radon measure solutions to a scalar conservation law and the compressible Euler equations, we introduce the generalized Rankine-Hugoniot conditions and over-compressing entropy condition of delta shocks.

Let $\mathscr{M}(\mathbb{R})$ be the space of signed Radon measures on the real line $\mathbb{R}$. It is the dual space of $C_c(\mathbb{R})$ consisting of compactly supported real-valued continuous functions on $\mathbb{
R}$. We call $m: [0, +\infty)\rightarrow \mathscr{M}(\mathbb{
R})$ a weak continuous mapping, if for all $\phi(x)\in C_{c}(\mathbb{R})$, the function
\begin{equation}\label{1.3}
[0,+\infty)\to\mathbb{R}:
\qquad t\mapsto\langle m(t),\phi(x)\rangle\doteq\int_{\mathbb{
R}}{\phi(x)\mathrm{d}m(t)}
\end{equation}
is continuous. The collection of such mappings is denoted by  $C([0, +\infty); \mathscr{M}(\mathbb{
R}))$.
Naturally, the (generalized) derivative of a measure $m(t)\in\mathscr{M}(\mathbb{
R})$ with respect to the space variable $x$, denoted as $m(t)_{x}$, is a distribution on the space $C_c^1(\mathbb{
R})$, of continuously differentiable functions with compact supports in $\mathbb{
R}$, defined by
\begin{equation}\label{eq2}
\langle m(t)_{x},\phi(\cdot)\rangle\doteq-\langle m(t), \ \phi_{x}(\cdot)\rangle,~~~\forall  \,\phi\in C_{c}^{1}(\mathbb{
R}).
\end{equation}
The derivative of $m(t)$ with respect to time $t$, $(m(t))_t=m'(t)$, is defined through the following weak derivative of the real-valued function $t\mapsto\langle m(t),\phi(x)\rangle$:
\begin{equation}\label{eq1}
\langle m'(t),\phi(\cdot)\rangle=\frac{\dd}{\dd t}\langle m(t),\phi(\cdot)\rangle,~~~\forall\,  \phi\in C_{c}(\mathbb{
R}).
\end{equation}

An important example is the Dirac measure supported on a Lipschitz curve $L$, given by $x=x(t)$ for $t\in [0,T)$,
with a weight $\omega_L(t)\in C^1([0,T))$. It is denoted by $\omega_L(t)\delta_{\{x=x(t)\}}$, and
\begin{equation}\label{eq:quxiancedu}
\langle\omega_L(t)\delta_{\{x=x(t)\}},\phi(\cdot)\rangle\doteq\omega_L(t)\phi(x(t)),~~~\forall \, \phi(x)\in C_{c}(\Bbb R).
\end{equation}
One checks easily that $\big(\omega_L(t)\delta_{\{x=x(t)\}}\big)_x=\omega_L(t)\delta'_{\{x=x(t)\}}$, with $\delta'_{\{x=x(t)\}}$ the dipole at the point $x(t)$, and $$\big(\omega_L(t)\delta_{\{x=x(t)\}}\big)_t=\omega'_L(t)\delta_{\{x=x(t)\}}+\omega_L(t)x'(t)\delta'_{\{x=x(t)\}}.$$

\begin{definition}\label{def2.0}  We say $(m(t), n(t))\in (C([0,+\infty);\mathscr{M}(\Bbb R)))^2$ is a Radon measure solution to the scalar conservation law
\begin{equation}\label{2.28}
m_{t}+n_{x}=0, \qquad t>0, \ x\in \Bbb R,
\end{equation}
with initial datum $m_0$, if $m(0)=m_0$, and \eqref{2.28} holds in the sense specified above, i.e., for any $\phi(x), \psi(t)\in C^1_c(\mathbb{R})$:
\begin{equation}\label{2.308new}
  \int_{0}^{+\infty}\langle m(t),\phi(\cdot)\rangle\psi'(t)\,\dd t+\int_{0}^{+\infty}\langle n(t),\phi'(\cdot)\rangle\psi(t)\,\dd t+\psi(0)\langle m_0, \phi(\cdot)\rangle=0.
\end{equation}\qed
\end{definition}

By an application of Stone-Weierstrass theorem, it is well-known that linear combinations of functions of the form $\phi\otimes\psi(x,t)\doteq \phi(x)\psi(t)$ are dense in $C_c^1(\mathbb{R}^2)$, the set of continuously differentiable functions with compact supports on the $(x,t)$-plane \cite[Chaper 4, Theorem 3]{Schwartz}. Thus
we infer that \eqref{2.308new} could be  written equivalently as
\begin{equation}\label{2.308}
  \int_{0}^{+\infty}\langle m(t),\varphi_{t}(\cdot,t)\rangle\dd t+\int_{0}^{+\infty}\langle n(t),\varphi_{x}(\cdot,t)\rangle\dd t+\langle m(0), \varphi(\cdot,0)\rangle=0,\ \forall  \varphi\in C_c^1(\mathbb{R}^2).
\end{equation}
This enables us to derive the  following generalized  Rankine-Hugoniot conditions for delta shocks:

\begin{lemma}\label{lm2.1}
Assume that $x=x(t)$ is a Lipschitz curve with $x(0)=0$, and
\begin{equation}\label{2.34}
m(t)=m_{l}(x,t)\mathcal{L}^{1}\lfloor\{x<x(t)\}+m_{r}(x,t)\mathcal{L}^{1}\lfloor\{x>x(t)\}+w_{m}(t)\delta_{\{x=x(t)\}},
\end{equation}
\begin{equation}\label{2.35}
n(t)=n_{l}(x,t)\mathcal{L}^{1}\lfloor\{x<x(t)\}+n_{r}(x,t)\mathcal{L}^{1}\lfloor\{x>x(t)\}+w_{n}(t)\delta_{\{x=x(t)\}}
\end{equation}
are piecewise smooth Radon measures solving the scalar conservation law
\begin{equation}\label{2.36}
m'(t)+n(t)_{x}=0, \qquad \ t>0, \ x\in \Bbb R
\end{equation}
with {initial data}
\begin{equation}\label{2.371}
\left \{
\begin{split}
& m(0)=m_{l}(x,0)\mathcal{L}^{1}\lfloor\{x<0)\}+m_{r}(x,0)\mathcal{L}^{1}\lfloor\{x>0\} +w_{m}^0\delta_{\{x=0\}},\\
&n(0)=n_{l}(x,0)\mathcal{L}^{1}\lfloor\{x<0)\}+n_{r}(x,0)\mathcal{L}^{1}\lfloor\{x>0\}+w_{n}^0\delta_{\{x=0\}},
\end{split} \right.
\end{equation}
where $m_{l}, m_{r},  n_{l}, n_r$ are all continuously differentiable functions satisfying the equation \eqref{2.36} point-wisely in their respective domain of definition, and continuous up to the boundary for $t>0$; moreover, $w_m^0, w_n^0$ are constants.
Then it holds along $x=x(t)$ that
 \begin{equation}\label{2.37}
\left \{
\begin{split}
&\frac{\mathrm{d}(w_{m}(t))}{\mathrm{d}t}=[m]x'(t)-[n],\\
&w_{m}(0)=w_m^0, \ \  w_{n}(t)=w_{m}(t)x'(t),
\end{split} \right.
\end{equation}
with $$[m]\doteq m_{r}(x(t)+, t)-m_{l}(x(t)-, t), \ [n]\doteq n_{r}(x(t)+, t)-n_{l}(x(t)-, t).$$
\end{lemma}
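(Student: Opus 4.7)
The plan is to substitute the ansatz \eqref{2.34}--\eqref{2.35} directly into the weak formulation \eqref{2.308} and harvest the three conclusions by picking suitable test functions. Fix an arbitrary $\varphi \in C_c^1(\mathbb R^2)$ and split every measure pairing into its absolutely continuous and singular pieces, so that $I_1+I_2+I_3=0$ becomes
\begin{equation*}
\big(I_1^{l}+I_1^{r}+I_2^{l}+I_2^{r}\big)+\big(I_1^{\delta}+I_2^{\delta}\big)+I_3=0,
\end{equation*}
where the superscripts $l,r$ denote the pieces integrated over the half-domains $D_l=\{x<x(t),\,t>0\}$ and $D_r=\{x>x(t),\,t>0\}$, and $\delta$ denotes the Dirac contribution along $x=x(t)$.

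First I would handle the absolutely continuous pieces. Because $m_l,n_l$ and $m_r,n_r$ are $C^1$ on the closures of their half-domains and satisfy $m_t+n_x=0$ pointwise there, I can write $m_\ast\varphi_t+n_\ast\varphi_x=(m_\ast\varphi)_t+(n_\ast\varphi)_x$ (for $\ast=l,r$). Applying the divergence theorem to each $D_\ast$ kills the interior and leaves only boundary contributions on $\{t=0\}$ and on the Lipschitz curve $x=x(t)$. The $\{t=0\}$ parts cancel the absolutely continuous parts of $I_3$, while the curve boundaries combine to
\begin{equation*}
\int_0^{+\infty}\!\!\big([m]\,x'(t)-[n]\big)\,\varphi(x(t),t)\,\dd t,
\end{equation*}
using the jump notation of the statement and the fact that the outward unit normals on the two sides of $x=x(t)$ are opposite.

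Next I would treat the Dirac pieces by the chain rule $\varphi_t(x(t),t)=\tfrac{\dd}{\dd t}\varphi(x(t),t)-x'(t)\varphi_x(x(t),t)$, then integrate by parts in $t$ using that $w_m\in C^1$ and $\varphi$ has compact support. This produces
\begin{equation*}
I_1^{\delta}+I_2^{\delta}=-w_m(0)\varphi(0,0)-\int_0^{+\infty}\!\!w_m'(t)\varphi(x(t),t)\,\dd t+\int_0^{+\infty}\!\!\big(w_n(t)-w_m(t)x'(t)\big)\varphi_x(x(t),t)\,\dd t.
\end{equation*}
Adding everything and using $I_3=\int_{x<0}m_l(x,0)\varphi(x,0)\dd x+\int_{x>0}m_r(x,0)\varphi(x,0)\dd x+w_m^0\varphi(0,0)$, all absolutely continuous pieces cancel and what remains is
\begin{equation*}
\big(w_m^0-w_m(0)\big)\varphi(0,0)+\int_0^{+\infty}\!\!\big([m]x'(t)-[n]-w_m'(t)\big)\varphi(x(t),t)\,\dd t+\int_0^{+\infty}\!\!\big(w_n(t)-w_m(t)x'(t)\big)\varphi_x(x(t),t)\,\dd t=0.
\end{equation*}

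The last step is to separate the three coefficients by choosing appropriate $\varphi$. To extract $w_n(t)=w_m(t)x'(t)$, I pick $\varphi(x,t)=\big(x-x(t)\big)\eta(t)\chi(x,t)$ with $\eta$ a bump supported in a small interval $(t_0-\epsilon,t_0+\epsilon)\subset(0,\infty)$: such $\varphi$ vanishes on the shock curve while $\varphi_x(x(t),t)=\eta(t)\chi(x(t),t)$ can be made arbitrary. To extract $w_m'(t)=[m]x'(t)-[n]$, I use $\varphi(x,t)=\eta(t)\chi(x-x(t))$ with $\chi$ even, so that $\varphi_x(x(t),t)=\eta(t)\chi'(0)=0$ while $\varphi(x(t),t)=\eta(t)\chi(0)$ is arbitrary. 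Finally, choosing $\varphi$ supported near $(0,0)$ yields $w_m(0)=w_m^0$. The main subtlety is exactly this separation argument, since $\varphi$ and $\varphi_x$ along the curve are not a priori independent; building the explicit cut-offs above removes the obstruction and completes the derivation of \eqref{2.37}.
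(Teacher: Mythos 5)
Your proof is correct and follows exactly the route the paper indicates for this lemma: straightforward integration by parts on the two half-domains plus the arbitrariness of the test functions in \eqref{2.308}, with details deferred to \cite[Lemma 2.1]{Jin2021Radon}. The only cosmetic point is that test functions built from the curve, such as $(x-x(t))\eta(t)\chi(x,t)$ or $\eta(t)\chi(x-x(t))$, are merely Lipschitz in $t$ when $x(t)$ is Lipschitz, so they should be replaced by versions using a smooth approximation $x_{\varepsilon}(t)\to x(t)$ before passing to the limit; this does not affect the separation argument.
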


The proof of this lemma depends on straightforward integration-by-parts and the arbitrariness of test functions $\varphi$ in \eqref{2.308}. Details can be found in \cite[Lemma 2.1, p.2672]{Jin2021Radon}.

%We could also use ideas of Definition \ref{def2.0} to define Radon measure solutions of the Riemann problem \eqref{1.1}\eqref{1.2}.

\begin{definition}\label{def2.1}
Let $\varrho\in C([0,+\infty);\mathscr{M}(\mathbb{R}))$, and $u(t)$ be a $\varrho(t)$-measurable function on $\mathbb{R}$ for $t\ge0$. %$\varrho, \ m,\ n,\ \wp\in C([0,\infty);\mathscr{M}(\mathbb{R}))$.
We call $(\varrho(t),\ u(t))$ a {\it Radon measure solution} to the Riemann problem \eqref{1.1}\eqref{1.2}, %where $u(t)$ is a $\varrho(t)$-measurable function on $\mathbb{R}$ for $t\ge0$, provided that:
if there exist $m,\ n,\ \wp\in C([0,+\infty);\mathscr{M}(\mathbb{R}))$ satisfying
\par i) [linear relaxation] for any $\varphi\in C_{c}^{1}(\Bbb R^2),$  there hold
\begin{align}\label{2.3}
\int_{0}^{+\infty}\langle \varrho(t),\varphi_t(\cdot,t)\rangle\, \dd t+\int_{0}^{+\infty}\langle m(t),\varphi_x(\cdot,t)\rangle \,\dd t+\langle \varrho(0),\varphi(\cdot,0)\rangle =0,
\end{align}
\begin{equation}\label{2.4}
\begin{split}
\int_{0}^{+\infty}\langle m(t),\varphi_t(\cdot,t)\rangle \,\dd t&+\int_{0}^{+\infty}\langle n(t),\varphi_x(\cdot,t)\rangle \,\dd t
\\&+\int_{0}^{+\infty}\langle \wp(t),\varphi_x(\cdot,t)\rangle\, \dd t+\langle m(0),\varphi(\cdot,0)\rangle=0;
\end{split}\end{equation}

\par ii) [nonlinear constraints] $\varrho(t)$ is  non-negative for all $t\ge0$, such that $(m(t), \ n(t))$ is absolutely continuous with respect to $\varrho(t)$ (written as $(m, n)\ll\varrho$),
and the corresponding Radon-Nikodym derivatives satisfy for any $t\ge0$ and $\varrho(t)$-a.e. $x\in\mathbb{R}$ that
\begin{equation}\label{2.5}
\dfrac{\mathrm{d} m(t)(x) }{\mathrm{d} \varrho(t)(x)}\doteq u(x,t), \ \ \  u(x,t)^{2}=\dfrac{\mathrm{d} n(t)(x) }{\mathrm{d} \varrho(t)(x)};
\end{equation}
furthermore, $\wp\ll \varrho$ on the set where $\varrho\ll\mathcal{L}^1$; and for this case, there holds the state equation
\begin{equation}\label{2.6}
p(x,t)=p(\rho(x,t)), \qquad\mathcal{L}^1\text{-a.e.}~x,
\end{equation}
where $\rho(x,t)=\dd\varrho(t)/\dd\mathcal{L}^1$, and $p(x,t)=\dd\wp(t)/\dd\mathcal{L}^1$;

\par iii) [entropy condition] on the set where $\varrho\ll\mathcal{L}^1$,  all discontinuities in the flow field $(\rho, u)$ satisfy the classical Lax condition.\qed
\end{definition}
\begin{remark}\label{rem21}
It is easy to see that integral weak solutions could be identified as Radon measure solutions.  It is also natural that on the complement of support of $\varrho(t)$, the velocity $u(x,t)$ is meaningless, hence not defined. However, by \eqref{2.5}, it is still possible that $u$ is defined and nonzero on a $\varrho(t)$-measure null set.\qed
\end{remark}
\begin{remark}\label{rem22new}
We recognized that the idea of considering mass and momentum as measures had appeared in \cite{E1996Generalized} and \cite{lizhangyang}. Then the velocity is derived from the Radon-Nikodym derivatives, as shown in the above definition. It seems that this approach is more fundamental, as it could be easily generalized to study multidimensional problems.
\end{remark}
\begin{definition}\label{def22} Let $0<T\le+\infty$ and $t\in[0,T]$.
A piecewise-smooth Radon measure solution $(\varrho(t), u(x,t))$ to \eqref{1.1}\eqref{1.01} is called a {\it delta shock solution}, if:~1)~$\varrho(t)$ contains a Dirac measure supported  %concentrated
at $\{x(t)\}$, where $x=x(t)$ is a Lipschitz curve, and  $T\ge t\ge0$; ~2)~for any $T\ge t\ge0$, on the region $\Omega_{1}\doteq\{x\in\mathbb{R}:~x<x(t)\}$, $\Omega_{2}\doteq\{x\in\mathbb{R}:~ x>x(t)\}$, $\varrho(t)$ is absolutely continuous with respect to the Lebesgue measure $\mathcal{L}^1$,  with %smooth
the Radon-Nikodym derivative $\rho(x,t)$;  ~3)~both $u(x,t)$ and $\rho(x,t)$, as functions of $(x,t)$, are differentiable in $\{(x,t): x<x(t), T\ge t\ge0\}\cup\{(x,t): x>x(t), T\ge t\ge0\}$, and continuous up to the boundary $\{(x(t),t): T\ge t\ge0\}$. The curve $\{x=x(t), 0\le t\le T\}$ is called a delta shock-front, or simply  a delta shock.\qed
\end{definition}

A delta shock solution is called global, if $T=+\infty$; and local, if $T<+\infty$.

We consider the following admissibility condition of delta shocks. It resembles the entropy conditions for \eqref{eq13} conjectured by Tan, Zhang, and Keyfitz,  Kranzer, mentioned in the articles cited in Introduction. It is reasonable from the view point of sticky particles. However, its relevance to a well-posedness theory on the Radon measure solutions to general compressible Euler equations is still unclear.
\begin{definition}\label{def23}
We say a delta shock solution $(\varrho,u)$ to \eqref{1.1}\eqref{1.2} satisfying the over-compressing entropy condition, if
\begin{equation}\label{2.66}
u_{l}(t)\geq x'(t)\geq u_{r}(t),
\end{equation}
with $u_l(t)\doteq u(x(t)-,t)$ and $u_r(t)\doteq u(x(t)+,t)$.\qed
\end{definition}

\section{Delta shocks and their properties}\label{sec3}
In this section, we focus on the existence, entropy conditions, and  geometric properties of delta shock solutions to the Euler system \eqref{1.1}.

Firstly, we specify the right constant state $U_2=(u_2, \rho_2)$ that can be connected to the given left constant state $U_1=(u_1, \rho_1)$ by a delta shock, which means we consider a solution to
\eqref{1.1}\eqref{1.2} of the form
\begin{align}\label{2.7}
&\varrho(t) \doteq\rho_{1}\mathcal{L}^{1}\lfloor\Omega_{1}+\rho_{2}\mathcal{L}^{1}\lfloor\Omega_{2} +w_{\rho}(t)\delta_{\{x=x(t)\}},\\ \label{3.2}
&m(t)\doteq\rho_{1}u_{1}\mathcal{L}^{1}\lfloor\Omega_{1}+\rho_{2}u_{2}\mathcal{L}^{1}\lfloor\Omega_{2} +w_{m}(t)\delta_{\{x=x(t)\}},
\\ \label{2.8}
&n(t)\doteq\rho_{1}u_{1}^{2}\mathcal{L}^{1}\lfloor\Omega_{1} +\rho_{2}u_{2}^{2}\mathcal{L}^{1}\lfloor\Omega_{2}+w_{n}(t)\delta_{\{x=x(t)\}},
\\  \label{2.9}
&\wp(t)\doteq p_{1}\mathcal{L}^{1}\lfloor\Omega_{1} +p_{2}\mathcal{L}^{1}\lfloor\Omega_{2}+w_{p}(t)\delta_{\{x=x(t)\}}.
\end{align}
Here, $\Omega_{1}$ and $\Omega_{2}$ were defined in Definition \ref{def22}, with $x=x(t)$ the delta shock front to be determined, which satisfies $x(0)=0$. The weights,  $w_{\rho}(t),\ w_{m}(t), \ w_{n}(t),$ and $w_{p}(t)$,  are all functions of $t$, to be solved. The non-negativeness of $\varrho$  requires that $w_\rho(t)\geq0$.

Applying  Lemma \ref{lm2.1} to \eqref{2.3} (which is $\varrho_t+m_x=0$), with
initial data
\begin{equation}
\left \{
\begin{split}
&\varrho_0=\rho_{1}\mathcal{L}^{1}\lfloor\{x<0\}+\rho_{0}\delta_{\{x=0\}}+\rho_{2}\mathcal{L}^{1}\lfloor\{x>0\},\\
&m_0=\rho_1u_{1}\mathcal{L}^{1}\lfloor\{x<0\}+\rho_0u_{0}\delta_{\{x=0\}}+\rho_2u_{2}\mathcal{L}^{1}\lfloor\{x>0\} \\
\end{split} \right.
\end{equation} provided by \eqref{1.2}, we have
\begin{align}\label{2.10}
&w_{\rho}(0)=\rho_{0}, \ \  \  w_{m}(t)=w_{\rho}(t)x'(t),
\\ \label{2.11}
&\frac{\mathrm{d} (w_{\rho}(t))}{\mathrm{d} t}=[\rho]x'(t)-[\rho u],
\end{align}
where $[\rho]\doteq \rho_{2}-\rho_{1},~ [\rho u]\doteq \rho_{2}u_{2}-\rho_{1}u_{1}. $ It follows that
\begin{equation}\label{2.12}
w_{\rho}(t)=[\rho]x(t)-[\rho u]t+\rho_{0}.
\end{equation}
In a similar way, applying the generalized Rankine-Hugoniot conditions of \eqref{2.4} (which is $m_t+(n+\wp)_x=0$), one has
\begin{align}\label{2.13}
&w_{m}(0)=\rho_{0}u_{0}, \ \  \   w_{n}(t)+w_{p}(t)=w_{m}(t)x'(t),
\\ \label{2.14}
&\frac{\mathrm{d} (w_{m}(t))}{\mathrm{d} t}=[\rho u]x'(t)-[\rho u^{2}+p],
\end{align}
where $[\rho u^{2}+p]\doteq \rho_{2}u_{2}^{2}-\rho_{1}u_{1}^{2}+ p_{2}-p_{1}.$ Consequently,
\begin{equation}\label{2.15}
w_{m}(t)=[\rho u]x(t)-[\rho u^{2}+p]t+\rho_{0}u_{0}.
\end{equation}
By  \eqref{2.5}, we have $w_{n}(t)=w_{m}(t)x'(t)$. Then $\eqref{2.13}_2$ implies that $w_{p}(t)\equiv0.$

Recalling \eqref{2.5} and \eqref{2.7}, once we know the delta shock front $x=x(t)$, we obtain the Radon measure solution to problem \eqref{1.1}\eqref{1.2} that contains only a delta shock:
\begin{align}\label{2.151}
&\varrho(t)=\rho_{1}\mathcal{L}^{1}\lfloor\Omega_{1}+\rho_{2}\mathcal{L}^{1}\lfloor\Omega_{2}
+([\rho]x(t)-[\rho u]t+\rho_{0})\delta_{\{x=x(t)\}},\\
\label{2.152}
&u(t)=u_{1}\mathsf{I}_{\Omega_{1}}+u_{2}\mathsf{I}_{\Omega_{2}}+x'(t)\mathsf{I}_{\{x=x(t)\}}.
\end{align}

Thanks to $w_{m}(t)=w_{\rho}(t)x'(t)$ from \eqref{2.10},  equations \eqref{2.12} and \eqref{2.15} yield  the following Cauchy problem for a nonlinear ordinary differential equation of $x=x(t)$:
{\begin{equation}\label{2.16}
\left \{\begin{split}
&[\rho u]x(t)-[\rho u^{2}+p]t+\rho_{0}u_{0}=\big([\rho]x(t)-[\rho u]t+\rho_{0}\big)x'(t), \\&x(0)=0.
\end{split}\right.
\end{equation}}
Integrating \eqref{2.16} from 0 to $t$ gives
\begin{equation}\label{2.17}
\frac{[\rho]}{2}x^{2}(t)-([\rho u]t-\rho_{0})x(t)
=-\frac{[\rho u^{2}+p]t^{2}}{2}+\rho_{0}u_{0}t.
\end{equation}
Set
\begin{align}\label{eq39}
   & \Delta\doteq at^{2}+2\rho_{0}bt+\rho_{0}^{2}, \ \  b\doteq [\rho]u_{0}-[\rho u],\\
   & a\doteq [\rho u]^{2}-[\rho][\rho u^{2}+p]=\rho_{1}\rho_{2}[u]^{2}-[\rho][p].\label{eq40}
\end{align}
 We have the following conclusions about the existence of $x(t)$:
\begin{lemma}\label{lem2.3}% \textbf{(Existence) }
Consider the Riemann problem \eqref{1.1}\eqref{1.2} for polytropic gases with a solution given by \eqref{2.151}-\eqref{2.152}.

\begin{itemize}
\item[($\spadesuit$)] For $\rho_0=0$, the existence of solution  requires that  $a\geq 0$. More precisely,
  \begin{itemize}
    \item[1)]  there is a unique global delta shock solution with $x(t)=\frac{(u_{1}+u_{2})t}{2}$ and $w_{\rho}(t)=-\rho_{1}[u]t$ if $[\rho]=0$ and $[u]\le0$;
    \item[2)]  there is a  unique global delta shock solution with $x(t)=\frac{([\rho u]+\sqrt{a})t}{[\rho]}$ and $w_{\rho}(t)=\sqrt{a}\,t$ if $[\rho]\neq 0$ and $a\geq 0$.
  \end{itemize}
\item[($\spadesuit$)]  For $\rho_0>0$,  the existence of solution  requires that  $\Delta\geq0$. More precisely,
  \begin{itemize}
    \item[3)] if $[\rho]=0$, then  there is a unique global delta shock solution with  $x(t)=\frac{\rho_1 [u^{2}]t^{2}-2\rho_{0}u_{0}t}{2(\rho_1 [u]t-\rho_{0})}$ and $w_{\rho}(t)=-\rho_{1} [u]t+\rho_{0}$ if $[u]\leq 0$; and a unique local delta shock  solution for $t\in[0,t_*)$ with $t_*=\frac{\rho_0}{\rho_{1}[u]}$ if $[u]>0$. Moreover, $w_\rho(t_*-)=0$ and $x(t_*)=+\infty$ ($-\infty$, respectively) if $u_0\geq\frac{u_1+u_2}{2}$ ($u_0<\frac{u_1+u_2}{2}$, respectively);

    \item[4)] if $[\rho]\neq 0$, then there is a unique  global delta shock solution with  $x(t)=\frac{[\rho u]t-\rho_{0}+\sqrt{\Delta}}{[\rho]}$ and  $w_{\rho}(t)=\sqrt{\Delta}$,  if the initial data hold $a\geq 0$ and $ b\geq -\sqrt{a}$; and a  unique local delta shock solution for $t\in[0, t^{*}]$, if the initial data satisfy one of the following: ~i)~ $b<-\sqrt{a}$ and $a>0$; ~ii)~ $a<0$; ~iii)~ $a=0$ and $b<0$. Here $t^{*}=\frac{-\rho_{0} b-\rho_{0}\sqrt{b^{2}-a}}{a}$ for $a\ne0$ and $t^*=-\frac{\rho_0}{2b}$ for $a=0$. Moreover, $x(t^*)$ is { finite, %and
        $w_\rho(t^*-)=0$ and $x'(t^*-)=\infty$.}
  \end{itemize}
  \end{itemize}
\end{lemma}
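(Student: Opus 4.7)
The plan is to take the integrated ODE \eqref{2.17}, viewed as a quadratic (or linear when $[\rho]=0$) equation in the unknown $x(t)$, solve it explicitly, and then use the constraint $w_{\rho}(t)\ge0$ to single out the admissible branch and determine the maximal interval on which the delta shock exists. Observe first that for the ansatz \eqref{2.151}--\eqref{2.152}, the weight rewrites as $w_{\rho}(t)=[\rho]x(t)-[\rho u]t+\rho_{0}$ by \eqref{2.12}, and the quadratic formula applied to \eqref{2.17} gives, when $[\rho]\neq 0$,
\begin{equation*}
x(t)=\frac{[\rho u]t-\rho_{0}\pm\sqrt{\Delta(t)}}{[\rho]},\qquad \Delta(t)=at^{2}+2\rho_{0}bt+\rho_{0}^{2}.
\end{equation*}
Substituting back yields $w_{\rho}(t)=\pm\sqrt{\Delta(t)}$, so the admissibility $w_{\rho}\ge0$ forces the $+$ branch, giving $w_{\rho}(t)=\sqrt{\Delta(t)}$ and therefore reducing the whole existence question to a sign analysis of $\Delta(t)$ on $[0,+\infty)$.

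Next I would dispose of the degenerate cases. When $[\rho]=0$, \eqref{2.17} is linear in $x$ and solves as stated, with $w_{\rho}(t)=-\rho_{1}[u]t+\rho_{0}$; global non-negativity of $w_{\rho}$ needs $[u]\le0$ (case $\rho_{0}=0$ or $\rho_{0}>0$), otherwise $w_{\rho}$ vanishes at $t_{*}=\rho_{0}/(\rho_{1}[u])$ and the solution is only local. At $t=t_{*}$ the denominator of the formula for $x(t)$ vanishes, so a direct evaluation of the numerator at $t_{*}$, using $[u^{2}]=(u_{1}+u_{2})[u]$, shows the numerator has the sign of $u_{1}+u_{2}-2u_{0}$; combined with the fact that the denominator approaches $0$ from below, this gives the claimed $\pm\infty$ behaviour of $x(t_{*}-)$. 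The subcase $\rho_{0}=0$ is then recovered by specialising these formulas ($\Delta=at^{2}$), which only requires $a\ge0$ and reproduces the linear shock front with speed $\tfrac12(u_{1}+u_{2})$ when $[\rho]=0$, or $([\rho u]+\sqrt{a})/[\rho]$ otherwise.

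For $[\rho]\neq 0$ and $\rho_{0}>0$ I would perform a case analysis of the quadratic $\Delta(t)$, using $\Delta(0)=\rho_{0}^{2}>0$:
\begin{itemize}
\item If $a>0$, $\Delta$ is an upward parabola whose discriminant (in $t$) is $4\rho_{0}^{2}(b^{2}-a)$; when $b^{2}\le a$ it stays non-negative, and when $b^{2}>a$ the two roots share the sign of $-b$, hence are both non-positive precisely when $b\ge0$, leading to the sharp global criterion $b\ge-\sqrt{a}$.
\item If $a=0$, $\Delta(t)=2\rho_{0}bt+\rho_{0}^{2}$ is linear and non-negative on $[0,+\infty)$ iff $b\ge0$.
\item If $a<0$, $\Delta$ opens downward and must eventually become negative, so only a local solution is possible.
\end{itemize}
In the local cases the first positive zero $t^{*}$ of $\Delta$ is the smaller root of $at^{2}+2\rho_{0}bt+\rho_{0}^{2}=0$; a signed-root computation gives $t^{*}=(-\rho_{0}b-\rho_{0}\sqrt{b^{2}-a})/a$ when $a\neq 0$ and $t^{*}=-\rho_{0}/(2b)$ when $a=0$. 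Finally, at $t=t^{*}$ one has $\sqrt{\Delta(t^{*})}=0$, so $x(t^{*})=([\rho u]t^{*}-\rho_{0})/[\rho]$ is finite, while differentiating $x(t)$ produces the term $\Delta'(t)/(2[\rho]\sqrt{\Delta(t)})$ which blows up because $\Delta'(t^{*})\neq 0$ at a simple root; hence $x'(t^{*}-)=\infty$ as asserted.

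I expect the main obstacle to be the sign/branch bookkeeping: choosing the $+$ square root consistently with $w_{\rho}\ge0$, and in particular tracking simultaneously the signs of $[\rho]$, $a$, $b$, and $\Delta'$ to justify the uniformly stated criterion $a\ge 0$ and $b\ge-\sqrt{a}$ for global existence and to extract the correct formula for $t^{*}$. The statement about $x(t_{*}-)$ in the linear case $[\rho]=0$ is another delicate point because it is a $0/0$ limit requiring explicit cancellation; everything else reduces to elementary algebra once the integrated equation \eqref{2.17} is in hand.
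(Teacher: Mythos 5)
Your proposal follows essentially the same route as the paper: integrate the ODE \eqref{2.16} to get \eqref{2.17}, solve the resulting linear/quadratic equation for $x(t)$, use $w_{\rho}(t)=\pm\sqrt{\Delta}\ge 0$ to select the $+$ branch, and reduce everything to the sign analysis of $\Delta(t)$ on $[0,+\infty)$, with the same root formulas for $t^{*}$ and the same one-sided limits for $x$ and $x'$. The only loose end is the borderline case $u_{0}=\tfrac{u_1+u_2}{2}$ in item 3), where the numerator also vanishes at $t_{*}$ and the limit of $x(t)$ is in fact finite rather than $+\infty$; but this is a defect of the stated lemma itself (the paper's own computation has the same issue), not of your argument.
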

\begin{remark}\label{rmk2new}
  The solutions to the (singular) Riemann problems for pressureless Euler equations could be found in \cite[Section 3.4]{lizhangyang}, see also \cite{Gao2021Free}, which corresponds to the case $[p]\equiv0$ in \eqref{eq40} and is much simpler. The differences reflect the role played by the pressure.\qed
\end{remark}

\begin{proof}
By the preceding analysis, we only need to focus on the solvability of $x(t)$ satisfying \eqref{2.17}.

1. For $[\rho]=0,$  we have $[p]=0,$ and \eqref{2.17} is reduced to
\begin{equation}\label{2.18}
2(\rho_{1} [u]t-\rho_{0})x(t)=\rho_1 [u^{2}]t^{2}-2\rho_{0}u_{0}t.
\end{equation}
It follows that \begin{align}\label{eq44}
  x(t)=\frac{\rho_{1} [u^{2}]t^{2}-2\rho_{0}u_{0}t}{2(\rho_{1} [u]t-\rho_{0})}.
\end{align}
By \eqref{2.12}, we find
\begin{equation}\label{2.18a}
  w_{\rho}(t)=-\rho_{1} [u]t+\rho_{0}.
\end{equation}

If $\rho_0=0$, then $x(t)=\frac{(u_{1}+u_{2})t}{2}$ and $w_{\rho}(t)=-\rho_{1}[u]t$. For this case, the non-negativity of $w_\rho(t)$ is equivalent to $[u]\leq 0,$ while for $[u]=0$ there is no discontinuity.  We complete the proof of item 1).

If $\rho_0>0$, it is easy to see that if $[u]\le0$, then $w_{\rho}(t)>0$ for $t>0$. However, if $[u]>0$,  $x(t)$ blows up as $t\nearrow t_{*}=\frac{\rho_{0}}{\rho_{1} [u]},$ and $ w_{\rho}(t_{*})=0.$ Moreover,
\begin{align*}
\lim_{t\rightarrow t_{*}-}x(t)&=\lim_{t\rightarrow t_{*}-}\frac{\rho_{1} [u^{2}](t^{2}-(t_{*}^{-})^{2})+\rho_{0}(u_1+u_2-2u_{0})\frac{\rho_{0}}{\rho_{1} [u]}}{2(\rho_{1}[u]t-\rho_{0})}\\
&=
\begin{cases}
+\infty, \ \ \ &{\rm if~}u_{0}\geq\frac{u_{1}+u_{2}}{2},\\
-\infty, \ \ \ &{\rm if~}u_{0}<\frac{u_{1}+u_{2}}{2}.
\end{cases}
\end{align*}
This completes the proof of item 3).

2. For $[\rho]\neq 0$, it follows from \eqref{2.17} that
\begin{equation}\label{2.228}
x(t)=\frac{[\rho u]t-\rho_{0}\pm \sqrt{\Delta}}{[\rho]}.
\end{equation}
Combining it with  {\red \eqref{2.12}} gives $w_{\rho}(t)=%[\rho]x(t)-[\rho u]t+\rho_{0}=
\pm \sqrt{\Delta}$. Nonnegativity of $w_{\rho}(t)$ requires that
\begin{equation}\label{wrho}
 w_{\rho}(t)=\sqrt{\Delta}.
\end{equation}
Correspondingly the delta shock front  is
\begin{equation}\label{2.228a}
x(t)=\frac{[\rho u]t-\rho_{0}+ \sqrt{\Delta}}{[\rho]}.
\end{equation}

If $\rho_0=0$, $\ a\geq 0$, then $\Delta=\sqrt{a}t\geq0$. Thus there is a global delta shock front $x(t)=\frac{([\rho u]+\sqrt{a})t}{[\rho]}$, with $w_{\rho}(t)=\sqrt{a}t$. We complete the proof of item 2).

For the case $\rho_{0}>0,$ if $a\geq 0,\ b\geq -\sqrt{a}$, then $\Delta=at^{2}+2\rho_{0}bt+\rho_{0}^{2}\geq 0$ for all $t\geq 0$.  It is easy to check that  $\Delta\geq0$ for $t\in[0,t^*]$ with some positive $t^*$ if there holds one of the three cases: i)  $a>0, \ b< -\sqrt{a}$; ii) $a<0;$ iii) $a=0$, $b<0$. %, then $\Delta\geq0$ only holds for $0\leq t\leq
In particular, %$t^{*}=\frac{-\rho_{0} b-\rho_{0}\sqrt{b^{2}-a}}{a}$ for $a\ne0$ and % $0\leq t\leq
%$t^{*}=-\frac{\rho_0}{2b}$ for $a=0$.
$$t^{*}=\begin{cases}\displaystyle\frac{-\rho_{0} b-\rho_{0}\sqrt{b^{2}-a}}{a}, &\mbox{if}~a\ne0,\\
\displaystyle-\frac{\rho_0}{2b},&\mbox{if}~a=0.
\end{cases}$$
  Moreover,  $x(t^*-)=\frac{[\rho u]t^*-\rho_{0}}{[\rho]}$ and $w_\rho(t^*-)=\sqrt{\Delta}=0$. It follows from \eqref{2.228a} that $$x'(t)=\frac{[\rho u]\sqrt{\Delta}+at+\rho_{0} b}{[\rho]\sqrt{\Delta}},$$ hence $$x'(t^*-)=\lim_{t\rightarrow t^*-}%\frac{[\rho u]\sqrt{\Delta}+at^*+\rho_{0} b}{[\rho]\sqrt{\Delta}}=
\frac{-\rho_{0}\sqrt{b^{2}-a}}{[\rho]\sqrt{\Delta}}=
\infty.$$ % as $t\to t^*-$. }
The proof of item 4) is completed. \qed
\end{proof}

The results of Lemma \ref{lem2.3} are summarized in Table \ref{tab2}, except for the case $\rho_0=0$, $[\rho]=0,$ $[u]=0$, for which the solution  is a trivial constant state.
 \begin{table}[t]
\centering{\small
\caption{Existence of a single delta shock solution. ($a,b$ and $\Delta$ were defined by \eqref{eq39}\eqref{eq40}.)}\label{tab2}
 \resizebox{\textwidth}{40mm}{
 \begin{tabular}{|p{2.5em}|p{3.1em}|p{10em}|p{7.5em}|p{3.5em}|p{7.5em}|p{3em}|}
\hline
\multirow{3}{*}{$\rho_0=0$}&{$[\rho]=0$}&$x(t)=\frac{(u_{1}+u_{2})t}{2}$ &{$w_{\rho}(t)=-\rho_{1}[u]t$} &{$[u]<0$} &\multirow{2}{*}{$w_{\rho}(t)\nearrow \infty$} &\multirow{5}{*}{\makecell{global \\ delta \\shock\\ solution}}\\
 \cline{2-5}~&\multirow{2}{*}{$[\rho]\neq 0$}&\multirow{2}{*}{$x(t)=\frac{([\rho u]+\sqrt{a})t}{[\rho]}$} &\multirow{2}{*}{$w_{\rho}(t)=\sqrt{a}t$}&{$a>0$}&~&~\\
  \cline{5-6}~&~&~&~&{$a=0$}&{$w_{\rho}(t)=0$}&~\\
\cline{1-6}%
\multirow{22}{*}{$\rho_0>0$}& \multirow{3}{*}{$[\rho]=0$}& \multirow{3}{*}{$x(t)=\frac{\rho_1[u^{2}]t^{2}-2\rho_{0}u_{0}t}{2(\rho_1 [u]t-\rho_{0})}$}& \multirow{3}{*}{$\makecell{w_{\rho}(t)=-\rho_{1}[u]t\\ \,\,\,\,+\rho_0}$} &{$[u]<0$}&{$w_{\rho}(t) \nearrow \infty$}&~\\
  \cline{5-6}~&~&~&~&{$[u]=0$}&{$w_{\rho}(t)=\rho_0$}&~\\
\cline{5-7}~&~&~&~&{$[u]>0$}&\multirow{9}{*}{$w_{\rho}(t)\searrow 0$}&\multirow{9}{*}{\makecell{local \\ delta \\shock \\ solution}}\\
 \cline{2-5}
 ~&~&~&~&{\makecell{$a>0,$ \\ $b<-\sqrt{a}$}}&~&~\\
 \cline{5-5}~&~&~&~&{\makecell{$a=0,$ \\ $ b<0$}}&~&~\\
 \cline{5-5}~&~&~&~&{\makecell{$a<0,$ \\ $ b<0$}}&~&~\\
 \cline{5-6}~&~&~&~&{\makecell{$a<0, $ \\ $b\geq 0$}}&{$w_{\rho}(t)\nearrow\searrow 0$}&~ \\
 \cline{5-7}
 ~&{$[\rho]\neq 0$}&{ $x(t)=\frac{[\rho u]t-\rho_{0}+\sqrt{\Delta}}{[\rho]}$ }&{$w_{\rho}(t)=\sqrt{\Delta}$}
 &{\makecell{$a>0,$ \\ $ b>-\sqrt{a}$}}&\multirow{3}{*}{$w_{\rho}(t)\nearrow \infty$} &\multirow{8}{*}{\makecell{global \\ delta \\ shock\\ solution}}\\
  \cline{5-6}~&~&~&~&{\makecell{$a>0, $ \\ $ b=-\sqrt{a}$}}&\multirow{3}{*}{$w_{\rho}(t)\searrow 0 \nearrow\infty$}&~ \\
 \cline{5-6}~&~&~&~&{\makecell{$a=0,$ \\ $b>0$}}&{$w_{\rho}(t)\nearrow \infty$}&~ \\
 \cline{5-6}~&~&~&~&{\makecell{$a=0, $ \\ $ b=0$}}&{$w_{\rho}(t)=\rho_0$}&~ \\
\hline
\end{tabular}}}
\end{table}

We now illustrate the set of initial data in the upper-half $(u,\rho)$-plane that admit a single delta shock for $\rho_0=0$.
From the equation $a\doteq\rho_{1}\rho_2[u]^{2}-[\rho][p]=0$, we get four curves passing $(u_1, \rho_1)$ as follows:% (see Figure \ref{fig2}):
\begin{equation*}
S_{1}\doteq\left\{(u,\rho)~|~ u-u_{1}=-\sqrt{\frac{(\rho-\rho_{1})(p-p_{1})}{\rho \rho_{1}}}, \ \  \rho>\rho_{1}\right\},
\end{equation*}
\begin{equation}\label{2.47} S_{11}\doteq\left\{(u,\rho)~|~ u-u_{1}=\sqrt{\frac{(\rho-\rho_{1})(p-p_{1})}{\rho \rho_{1}}},  \ \ \rho<\rho_{1}\right\},
\end{equation}
\begin{equation*}S_{2}\doteq\left\{(u,\rho)~|~ u-u_{1}=-\sqrt{\frac{(\rho-\rho_{1})(p-p_{1})}{\rho \rho_{1}}}, \ \ \rho<\rho_{1}\right\},
\end{equation*}
\begin{equation}\label{2.48}S_{22}\doteq\left\{(u,\rho)~|~  u-u_{1}=\sqrt{\frac{(\rho-\rho_{1})(p-p_{1})}{\rho \rho_{1}}}, \ \ \rho>\rho_{1}\right\}.
\end{equation}
Note that these curves are exactly the Rankine-Hugoniot loci derived from the classical Rankine-Hugoniot conditions of shock-fronts.

Let $R_1$ and $R_2$ be the rarefaction wave curves as indicated in Appendix (see \eqref{3.17} and \eqref{3.18}). Then they, together with $S_1$ and $S_2$, divide the upper half-plane $\{(u,\rho):\rho>0\}$ into five regions $\mathrm{I}\cup \mathrm{II}\cup \mathrm{III}\cup \mathrm{IV}\cup \mathrm{V}$, see Figure \ref{fig8}.
%\vspace{-0.5cm}
\begin{figure}[t]
\centering
  \includegraphics[width=0.6\textwidth]{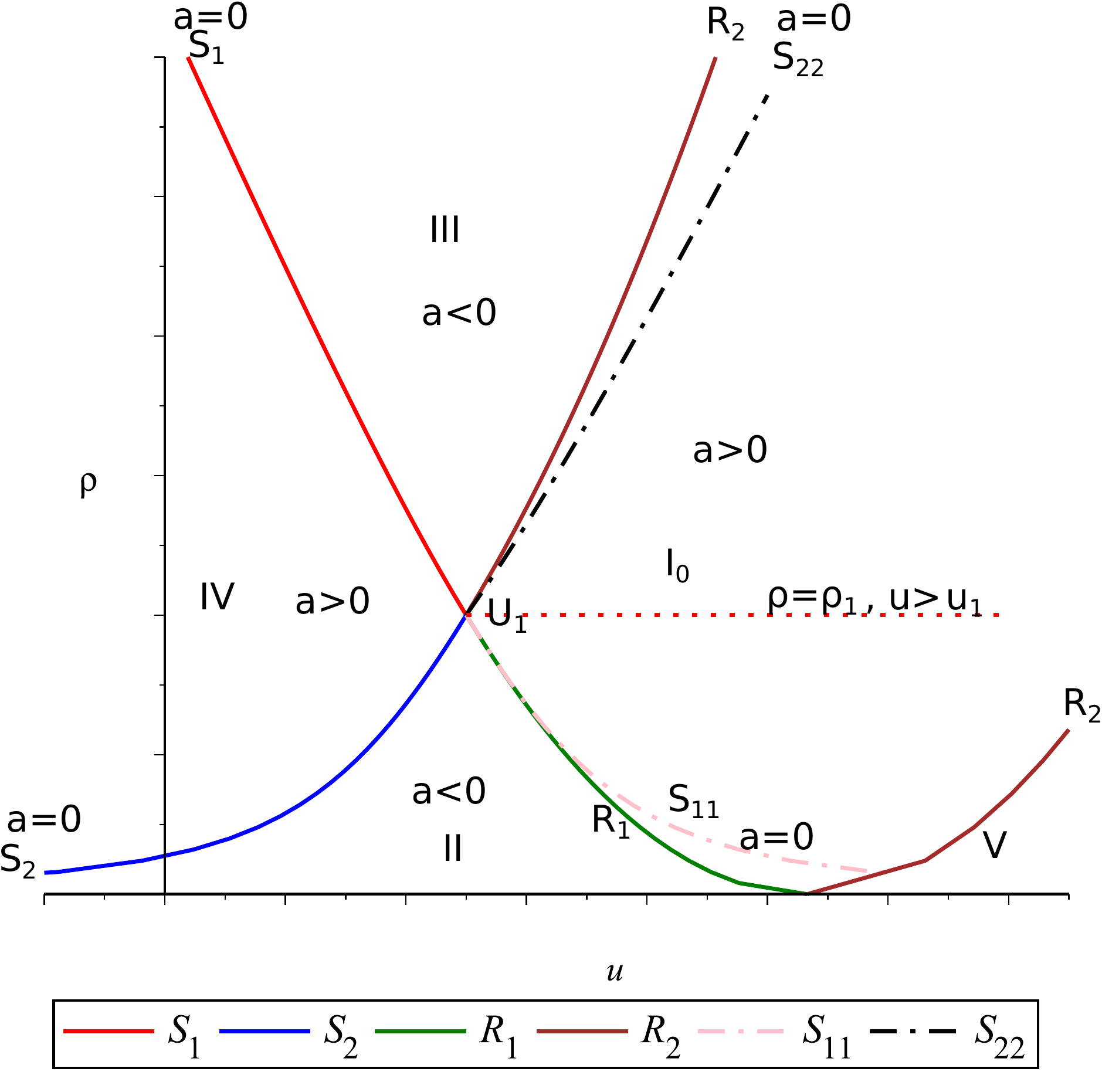}
\caption{
For ${\rho_0=0}$ and given $U_1=(u_1,\rho_1)$, those $U_2$ lying in $\big(\mathrm{I_{0}}\setminus\big\{(u,\rho)~|~\rho=\rho_1, u>u_1\big\}\big)\cup \mathrm{IV}$ could be connected to $U_1$ by a single  delta shock.}\label{fig22}
\end{figure}
%\vspace{-5cm}
We now prove that for any $(u,\rho)\in R_{2}\setminus \{U_{1}\},$ it holds $a(u,\rho)<0,$ which means that $R_2$ is on the left-hand-side of $S_{22},$ as drawn in Figure \ref{fig22}.
By \eqref{3.18} and \eqref{2.48}, we need to show that
\begin{equation}\label{eq49new}
\begin{split}H(\rho)&\doteq\frac{2\sqrt{\gamma}}{\gamma-1}
\Big(\rho^{\frac{\gamma-1}{2}}-\rho_1^{\frac{\gamma-1}{2}}\Big)-\sqrt{\frac{(\rho-\rho_{1})(p-p_{1})}{\rho \rho_{1}}}
\\&=\frac{\frac{4\gamma}{(\gamma-1)^{2}}
\Big(\rho^{\frac{\gamma-1}{2}}-\rho_1^{\frac{\gamma-1}{2}}\Big)^{2}-\frac{(\rho-\rho_{1})(p-p_{1})}{\rho \rho_{1}}}{\frac{2\sqrt{\gamma}}{\gamma-1}
\Big(\rho^{\frac{\gamma-1}{2}}-\rho_1^{\frac{\gamma-1}{2}}\Big)+\sqrt{\frac{(\rho-\rho_{1})(p-p_{1})}{\rho \rho_{1}}}}<0, \qquad\forall\, \rho>\rho_1.
 \end{split}
\end{equation}
It is equivalent to prove that the numerator is negative, namely
\begin{equation*}
h(\rho)\doteq\frac{4\gamma \rho \rho_{1}}{(\gamma-1)^{2}}
\Big(\rho^{\frac{\gamma-1}{2}}-\rho_1^{\frac{\gamma-1}{2}}\Big)^{2}-(\rho-\rho_{1})(p-p_{1})<0, \qquad \forall\, \rho>\rho_1.
\end{equation*}
%\\
%&=\frac{4\gamma}{(\gamma-1)^{2}}(\rho_1\rho^{\gamma}-2\rho_1^{\frac{\gamma+1}{2}}\rho^{\frac{\gamma+1}{2}}+\rho_1^{\gamma}\rho)-(\rho^{\gamma+1}-\rho_1^{\gamma}\rho-\rho_1\rho^{\gamma}+\rho_1^{\gamma+1})\\
%&=(\frac{4\gamma}{(\gamma-1)^{2}}+1)\rho_1\rho^{\gamma}+(\frac{4\gamma}{(\gamma-1)^{2}}+1)\rho_1^{\gamma}\rho-\frac{8\gamma}{(\gamma-1)^{2}}\rho_1^{\frac{\gamma+1}{2}}\rho^{\frac{\gamma+1}{2}}-\rho^{\gamma+1}-\rho_1^{\gamma+1}\\
%&=\frac{(\gamma+1)^{2}}{(\gamma-1)^{2}}\rho_1\rho^{\gamma}+\frac{(\gamma+1)^{2}}{(\gamma-1)^{2}}\rho_1^{\gamma}\rho-\frac{8\gamma}{(\gamma-1)^{2}}\rho_1^{\frac{\gamma+1}{2}}\rho^{\frac{\gamma+1}{2}}-\rho^{\gamma+1}-\rho_1^{\gamma+1}\\
%&=\frac{\rho_1^{\gamma+1}}{(\gamma-1)^{2}}\{(\gamma+1)^{2}(\frac{\rho}{\rho_1})^{\gamma}+(\gamma+1)^{2}(\frac{\rho}{\rho_1})-8\gamma(\frac{\rho}{\rho_1})^{\frac{\gamma+1}{2}}-(\gamma-1)^{2}(\frac{\rho}{\rho_1})^{\gamma+1}-(\gamma-1)^{2}\}\\
%&=\frac{\rho_1^{\gamma+1}}{(\gamma-1)^{2}}\{(\gamma+1)^{2}(\frac{\rho}{\rho_1})^{\gamma}+(\gamma+1)^{2}(\frac{\rho}{\rho_1})-2((\gamma+1)^{2}-(\gamma-1)^{2})(\frac{\rho}{\rho_1})^{\frac{\gamma+1}{2}}-(\gamma-1)^{2}(\frac{\rho}{\rho_1})^{\gamma+1}-(\gamma-1)^{2}\}\\
%\end{split}
%\end{equation*}
%\begin{equation*}
%\begin{split}&=\frac{\rho_1^{\gamma+1}}{(\gamma-1)^{2}}\{(\gamma+1)^{2}(\frac{\rho}{\rho_1})((\frac{\rho}{\rho_1})^{\frac{\gamma-1}{2}}-1)^{2}-(\gamma-1)^{2}((\frac{\rho}{\rho_1})^{\frac{\gamma+1}{2}}-1)^{2}\}\\
%&=\frac{\rho_1^{\gamma+1}}{(\gamma-1)^{2}}G(A), \ \rho>\rho_1.\\
%\end{split}
%\end{equation*}
Some computation yields $h(\rho)=\frac{\rho_1^{\gamma+1}}{(\gamma-1)^{2}}G(A),$
where $A=\frac{\rho}{\rho_1}>1$, and
\begin{equation*}
\begin{split} G(A)&\doteq(\gamma+1)^{2}A(A^{\frac{\gamma-1}{2}}-1)^{2}-(\gamma-1)^{2}(A^{\frac{\gamma+1}{2}}-1)^{2}\\
 &=\underbrace{\{(\gamma+1)A^{\frac{1}{2}}(A^{\frac{\gamma-1}{2}}-1)+(\gamma-1)(A^{\frac{\gamma+1}{2}}-1)\}}_{>0}
 \\
 &\qquad\quad\cdot\underbrace{\{(\gamma+1)A^{\frac{1}{2}}(A^{\frac{\gamma-1}{2}}-1)-(\gamma-1)(A^{\frac{\gamma+1}{2}}-1)\}}_{\doteq g(A)}.
 \end{split}
\end{equation*}
Notice that
$g'(A)=\frac{\gamma+1}{2}A^{-\frac{1}{2}}g_1(A),$ with $g_{1}(A)\doteq\gamma A^{\frac{\gamma-1}{2}}-1-(\gamma-1)A^{\frac{\gamma}{2}}.$ Obviously  $g_{1}(1)=0,$ and
 $$g'_{1}(A)=\frac{\gamma(\gamma-1)}{2}(A^{\frac{\gamma-3}{2}}-A^{\frac{\gamma-2}{2}})<0 \ \text{for} \ A>1,\ \gamma>1.$$
Thus $g_{1}(A)<0$, $g'(A)<0,$ $g(A)<0,$ $G(A)<0,$ $h(\rho)<0$ and $H(\rho)<0.$

Similarly, it can be shown that $R_1$ is on the left-hand-side of $S_{11}$, as depicted in Figure \ref{fig22}. It is easy to check that $a>0$ in the domain $\mathrm{IV}$ on the left-hand-side of $U_1$ bounded by $S_1$ and $S_2$, and the domain $\mathrm{I}_0$ on the right-hand-side of $U_1$ bounded by $S_{11}$ and $S_{22}$, while $a<0$ in the other domains, see Figure \ref{fig22}.  Then the existence region of delta shock solution is $\big(\mathrm{I_{0}}\setminus\big\{(u,\rho)~|~\rho=\rho_1, u>u_1\big\}\big)\cup \mathrm{IV}$.
We remark that the solutions do not satisfy the continuous dependence on initial value for those initial data near the half-line $\{(u,\rho)~|~\rho=\rho_1,\ u>u_1\}.$

\begin{remark}\label{rmk4new}
The connection of a delta shock and a classical free piston could be explained by using the generalized Rankine-Hugoniot conditions. Here by a classical free piston, we mean a finite solid body (modeled by an intervals with fixed length) carrying mass $\rho_0>0$ in a tube (i.e., the $x$-axis), driving by the differences of gas pressures on both sides of it \cite{Gao2021Free}, and moving according to the Newton's second law; while for all the time the piston does not absorb or release any gas. Thus the gas moves with the same speed as that of the piston nearing its surface, and the mass of the piston does not change. A natural quotient topology of $\mathbb{R}$ suggests that the piston could be regarded as a point. Now from \eqref{2.11}, and the assumption that $[u]=0$ (the speed of gas is the same on two sides of the piston), as well as $w_\rho(t)\equiv \rho_0$ for all $t\ge0$, one has $x'(t)=u(x(t)-,t)=u(x(t)+,t)$, namely the piston has the same speed as the gas close to it. (If $[\rho]\equiv0$, then there is no discontinuity in the flow.) Now by \eqref{2.10}, the equation \eqref{2.14}  reads $\frac{\dd (\rho_0 x'(t))}{\dd t}=[p]$, which is exactly the Newton's second law. Of course, Lemma \ref{lem2.3} implies that such a classical piston could not be used to eliminate all thermal discontinuities (shocks) in the flow field, such that the flow becomes uniform.\qed
\end{remark}

\begin{remark}\label{rm31}
For cases in item 4),  since $w_\rho(t^*)=0$, we say that the delta shock disappears at the moment $t^*$, and the problem is reduced to the case of $\rho_0=0$ to prolong the solution,  provided that $x(t^*)$ is finite.\qed
\end{remark}

We now check that whether the delta shocks constructed above obey the over-compressing entropy condition \eqref{2.66}. % will be specified in the next lemma.
Since $$b=[\rho]u_{0}-[\rho u]=\rho_{2}(u_{0}-u_{2})+\rho_{1}(u_{1}-u_{0}),$$ $b<0$ implies that at least one of $u_{0}-u_{2}$ and $u_{1}-u_{0}$ is less than zero. Hence the initial value does not satisfy \eqref{2.66} as long as $b<0$. Also \eqref{2.66} implies that the weight of mass $w_\rho(t)$ is a monotonically non-decreasing function of time, thus the global/local delta shock solutions obtained in Lemma \ref{lem2.3} satisfying \eqref{2.66} are only those with increasing or constant weight of mass. So we can exclude some cases that do not satisfy \eqref{2.66} from Table 1. Moreover, we have the following conclusions.
%
%For the global$/$local delta shock solution with increased or constant weight of density, it is possible to satisfy the entropy condition \eqref{2.66}, which will be discussed in detail as below.

\begin{proposition}\label{prp1}
Consider the Riemann problem  \eqref{1.1}\eqref{1.2} for polytropic gases.
\begin{itemize}
\item[\textcircled{1}]~ If the initial data satisfy one of the following conditions:
\begin{itemize}
\item[\textcircled{a}] for $\rho_0=0$,
 { \begin{itemize}
  \item[{\rm i)}] $[\rho]=0$ and  $[u]<0$;
  \item[{\rm ii)}] $[\rho]>0, ~ [u]\leq 0, ~a\geq \rho_{1}^{2}[u]^{2}$;
  \item[{\rm iii)}]  $[\rho]<0, ~ [u]\leq 0,~ a\geq \rho_{2}^{2}[u]^{2}$;
 \end{itemize}}
\item[\textcircled{b}] for $\rho_0>0$,
  \begin{itemize}
    \item[{\rm iv)}] %If
         $[\rho]=0,$ $ u_{2}\leq u_{0}\leq u_{1}$;
    \item[{\rm v)}] $[\rho]>0, ~ u_{2}\leq u_{0}\leq u_{1}, ~ a\geq \rho_{1}^{2}[u]^{2}$, $b\geq-\sqrt{a}$;
    \item[{\rm vi)}]$[\rho]<0, ~ u_{2}\leq u_{0}\leq u_{1},~ a\geq \rho_{2}^{2}[u]^{2}$, $b\geq-\sqrt{a}$,%If
  \end{itemize}
  \end{itemize}
      then there exists a unique global delta shock solution satisfying \eqref{2.66} for $t\in[0, +\infty).$

\item[\textcircled{2}] For $\rho_0>0$, if the initial data satisfy one of the following conditions:
  \begin{itemize}
    \item[{\rm vii)}] $[\rho]>0, ~ u_{2}\leq u_{0}\leq u_{1}: ~0<a<\rho_{1}^{2}[u]^{2},~ b\geq-\sqrt{a}$;
    \item[{\rm viii)}] $[\rho]<0, ~ u_{2}\leq u_{0}\leq u_{1}: ~0<a<\rho_{2}^{2}[u]^{2},~ b\geq-\sqrt{a}$,%If
  \end{itemize}
        then there exists a unique global delta shock solution satisfying \eqref{2.66} for a finite time.
\end{itemize}
\end{proposition}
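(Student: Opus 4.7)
The plan is to work directly from the closed-form expressions for $x(t)$ and $w_\rho(t)$ established in Lemma \ref{lem2.3}, compute the shock speed $x'(t)$ in each case, and determine precisely when $u_2 \le x'(t) \le u_1$ holds for all relevant $t$. Since the flow on the two sides of the delta shock consists of the constant states $U_1, U_2$, one has $u_l(t) \equiv u_1$ and $u_r(t) \equiv u_2$, so the condition \eqref{2.66} reduces to this pointwise bound, which in particular forces $[u] \le 0$.

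The essential identity is \eqref{2.11}, namely $w_\rho'(t) = [\rho]\,x'(t) - [\rho u]$. When $[\rho]\ne 0$ this yields $x'(t) = \bigl(w_\rho'(t) + [\rho u]\bigr)/[\rho]$, and differentiating $w_\rho(t) = \sqrt{\Delta}$ twice gives
\[
 x''(t) = \frac{\rho_0^2\,(a-b^2)}{[\rho]\,\Delta^{3/2}},
\]
so $x'(t)$ is monotone, with the sign of $x''$ equal to that of $(a-b^2)/[\rho]$. Combined with the initial value $x'(0)=u_0$ (immediate from $b=[\rho]u_0-[\rho u]$) and, when $a>0$, the asymptotic value $\lim_{t\to\infty} x'(t) = (\sqrt{a}+[\rho u])/[\rho]$, monotonicity reduces the global bound $u_2\le x'(t)\le u_1$ to two endpoint conditions. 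For $[\rho]>0$ a short calculation translates $x'(\infty)\in[u_2,u_1]$ into $-\rho_1[u]\le\sqrt{a}\le -\rho_2[u]$; the upper bound is automatic from \eqref{eq40} together with $[\rho][p]>0$ for polytropic gases, while the lower bound is exactly $a\ge\rho_1^2[u]^2$. The case $[\rho]<0$ is symmetric and produces $a\ge\rho_2^2[u]^2$. For the $\rho_0=0$ subcases, $x(t)$ is linear so $x'$ is constant, and the same endpoint analysis yields conditions i)--iii).

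When $\rho_0>0$ and $[\rho]=0$, identity \eqref{2.11} reduces to $w_\rho'=-\rho_1[u]$ and does not directly give $x'$, so I would substitute the explicit $x(t)$ from Lemma \ref{lem2.3} item 3) into $w_m(t)=w_\rho(t)\,x'(t)$ and simplify by the change of variable $y=\rho_1[u]t$ to reach the clean identity
\[
 x'(t) = \frac{u_1+u_2}{2} + \left(\frac{\rho_0}{w_\rho(t)}\right)^{\!2}\left(u_0-\frac{u_1+u_2}{2}\right).
\]
This shows $x'(t)$ moves monotonically from $u_0$ at $t=0$ toward $(u_1+u_2)/2$ as $t\to\infty$, so its range lies in $[u_2,u_1]$ iff $u_2\le u_0\le u_1$, confirming iv). Combining with the $[\rho]\ne 0$ analysis, conditions v) and vi) require both endpoint constraints; the auxiliary inequality $b\ge-\sqrt{a}$ is then automatic because $u_0\in[u_2,u_1]$ forces $b=\rho_2(u_0-u_2)+\rho_1(u_1-u_0)\ge 0\ge -\sqrt{a}$.

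For item \textcircled{2}, the hypothesis $0<a<\rho_1^2[u]^2$ (or $<\rho_2^2[u]^2$ when $[\rho]<0$) places $\lim_{t\to\infty}x'(t)$ strictly outside $[u_2,u_1]$ by the same endpoint computation, while $x'(0)=u_0$ lies inside. Monotonicity and continuity of $x'$ then produce a unique finite $T>0$ at which $x'(T)$ meets the boundary, so \eqref{2.66} holds on $[0,T]$ but fails immediately after. The main delicate step throughout is keeping careful track of the signs of $[\rho]$ and $[u]$ when squaring inequalities involving $\sqrt{a}$, and recognizing that the apparently missing upper bound (e.g.\ $a\le\rho_2^2[u]^2$ when $[\rho]>0$) is a structural consequence of $a=\rho_1\rho_2[u]^2-[\rho][p]$ together with $[\rho][p]>0$, rather than an extra hypothesis to be imposed.
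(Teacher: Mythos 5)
Your argument is correct, and it organizes the verification genuinely differently from the paper. The paper computes $x'(t)-u_1$ and $x'(t)-u_2$ as explicit rational functions of $t$ (see \eqref{2.22}, \eqref{2.23}, \eqref{2.230}) and determines the sign of each quadratic numerator directly, using the auxiliary inequalities $b^2\le\rho_2^2[u]^2$ and $b^2\ge\rho_1^2[u]^2$ obtained as in \eqref{eq53new}. You instead exploit the fixed sign of $x''(t)=\rho_0^2(a-b^2)/([\rho]\Delta^{3/2})$ (the same computation the paper only performs later, in the proof of Proposition \ref{prp3}) to conclude that $x'$ is monotone on $[0,+\infty)$, so that the global bound $u_2\le x'(t)\le u_1$ is equivalent to the two endpoint conditions $x'(0)=u_0\in[u_2,u_1]$ and $x'(+\infty)=([\rho u]+\sqrt{a})/[\rho]\in[u_2,u_1]$; the latter unpacks to $-\rho_1[u]\le\sqrt{a}\le-\rho_2[u]$ when $[\rho]>0$, and the half of this not listed in the hypotheses is indeed automatic because $a=\rho_1\rho_2[u]^2-[\rho][p]$ with $[\rho][p]>0$ for polytropic gases. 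Your closed form $x'(t)=\frac{u_1+u_2}{2}+\bigl(\rho_0/w_\rho(t)\bigr)^2\bigl(u_0-\frac{u_1+u_2}{2}\bigr)$ for $[\rho]=0$ checks out against \eqref{eq44} and makes case iv) transparent, since $x'(t)$ is then a convex combination of $u_0$ and $(u_1+u_2)/2$, both in $[u_2,u_1]$. The endpoint-plus-monotonicity route buys a cleaner and more uniform treatment (items vii)--viii) become an intermediate-value statement rather than a computation of the explicit crossing time), at the cost of not producing the paper's formula for the time $t^{\star}$ at which \eqref{2.66} first fails; both approaches inherit uniqueness of the delta shock from Lemma \ref{lem2.3}.
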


\begin{proof}
The conclusion about item i) follows from $x'(t)=\frac{u_1+u_2}{2}$ and $u_2<u_1$.

We go on to prove item ii).  By item 2) of Lemma \ref{lem2.3}, we have
$$x'(t)=\frac{[\rho u]+\sqrt{a}}{[\rho]}.$$
Thus
$$x'(t)-u_1=\frac{\rho_2[u]+\sqrt{a}}{[\rho]}=\frac{\rho^2_2[u]^2-a}{[\rho](\rho_2[u]-\sqrt{a})}\leq 0,$$
since $[u]<0,[\rho]>0$ and then $\rho^2_2[u]^2\geq a$.
From $a\geq \rho_{1}^{2}[u]^{2}$,  we have
\begin{equation}\label{2.24} x'(t)-u_2=\frac{a-\rho_{1}^{2}[u]^{2}}{(\sqrt{a}-\rho_1[u])[\rho]}\geq 0.\end{equation}
This completes the proof of item ii).
The proof of item iii) is similar.

Next, we prove item iv).
By item 3) of Lemma \ref{lem2.3}, we know that if $[\rho]=0$ and $[u]<0$, then problem \eqref{1.1}\eqref{1.2} admits a global delta shock solution  given by \eqref{eq44}, i.e.
%By Lemma \ref{lem2.3}, each of the items in this lemma admits a unique global solution in form of {\blue \eqref{2.151}-\eqref{2.152}}. \footnote{Revise the statement of the Proposition precise, especially on the local time admissibility condition, and detailed proof, particularly for the case $\rho_0=0$.  }
%
%If $[\rho]=0,$ by \eqref{eq44}, the delta shock front is
$$x(t)=\frac{\rho_1 [u^{2}]t^{2}-2\rho_{0}u_{0}t}{2(\rho_1 [u]t-\rho_{0})},$$
hence
$$x'(t)=\frac{\rho_1^{2}[u][u^{2}]t^{2}-2\rho_{0}\rho_1 [u^{2}]t+2\rho_{0}^{2}u_{0}}{2(\rho_1 [u]t-\rho_{0})^{2}}.$$
Since $u_{2}\leq u_{0}\leq u_{1},$ we have
\begin{equation}\label{2.22}x'(t)-u_{1}=\frac{\rho_1^{2} [u]^{3}t^{2}-2\rho_{0}\rho_1 [u]^{2}t+2\rho_{0}^{2}(u_{0}-u_{1})}{2(\rho_1[ u]t-\rho_{0})^{2}}\leq 0\end{equation}
and
\begin{equation}\label{2.23}x'(t)-u_{2}= \frac{-\rho_1^{2} [u]^{3}t^{2}+2\rho_{0}\rho [u]^{2}t+2\rho_{0}^{2}(u_{0}-u_{2})}{2(\rho_1 [ u]t-\rho_{0})^{2}}\geq 0\end{equation}
for all $t\geq 0$.
Thus, the entropy condition \eqref{2.66} is satisfied. The proof of item iv) is completed.

Next we turn to the proof of item v). %We then prove item 5).
In view of $u_{2}\leq u_{0}\leq u_{1}$, we have $$b=[\rho]u_{0}-[\rho u]=\rho_{2}(u_{0}-u_{2})+\rho_{1}(u_{1}-u_{0})\geq 0.$$
By item 4) of Lemma \ref{lem2.3}, %\eqref{2.228a},
the delta shock front is
$$x(t)=\frac{[\rho u]t-\rho_{0}+\sqrt{\Delta}}{[\rho]},\quad  t\in [0,+\infty).$$
So
$$x'(t)=\frac{[\rho u]\sqrt{\Delta}+at+\rho_{0} b}{[\rho]\sqrt{\Delta}},$$
and
 $$x'(t)-u_{1}=\frac{(a-\rho_{2}^{2}[u]^{2})at^{2}+2\rho_{0}b(a-\rho_{2}^{2}[u]^{2})t +\rho_{0}^{2}(b^{2}-\rho_{2}^{2}[u]^{2})}{[\rho]\sqrt{\Delta}(at+\rho_{0} b-\rho_{2}[u]\sqrt{\Delta})}.%\frac{\rho_{2}[u]\sqrt{\Delta}+at+\rho_{0} b}{[\rho]\sqrt{\Delta}},
 $$
Since $[\rho]>0$ implies $a\leq \rho_{2}^{2}[u]^{2}$, $b\geq0$ and
%to prove the first equality of \eqref{2.66},
%we need to justify the nonpositivity of
%$$\rho_{2}[u]\sqrt{\Delta}+at+\rho_{0}b$$
%for $t\geq0$ due to $[\rho]>0$. In view of $[u]<0$,
%it is enough to justify the sigh of
%\begin{equation}\label{2.23}
%(\rho_{2}^{2}[u]^{2}-a)at^{2}+2\rho_{0}b(\rho_{2}^{2}[u]^{2}-a)t +\rho_{0}^{2}(\rho_{2}^{2}[u]^{2}-b^{2}).%\geq 0
%\end{equation}
%Simple calculations give $\rho_{2}^{2}[u]^{2}\geq a$, and
\begin{equation}\label{eq53new}
  \rho_{2}^{2}[u]^{2}-b^{2}=(\rho_{2}[u]+b)(\rho_{2}[u]-b)=[\rho](u_{0}-u_{1})(\rho_{2}[u]-b)\geq 0\end{equation}
for $u_2\leq u_0\leq u_1$, we have $x'(t)-u_1\leq 0, \forall t\ge0$. %so \eqref{2.23} is nonegative for all $t\geq 0$.
%Next, we prove the second inequality of \eqref{2.66}.
{
Directive calculations give
\begin{equation}\label{2.230}
x'(t)-u_2=\frac{(a-\rho_{1}^{2}[u]^{2})at^{2}+2\rho_{0}b(a-\rho_{1}^{2}[u]^{2})t +\rho_{0}^{2}(b^{2}-\rho_{1}^{2}[u]^{2})}{[\rho]\sqrt{\Delta}(at+\rho_{0} b-\rho_{1}[u]\sqrt{\Delta})}.
\end{equation}
Since $a\geq \rho_{1}^{2}[u]^{2}$, and
$b^{2}\geq\rho_{1}^{2}[u]^{2},$
we have  $x'(t)-u_{2}\geq 0$ for all $t\geq 0$.  Item v) is proved.
The proof of item vi) is similar.} %So do item 2) and 3).}

{We go on to prove item vii). Due to  $ 0<a<\rho_{1}^{2}[u]^{2}$ and $b\geq-\sqrt{a}$, it follows from \eqref{2.230} that % and $b\geq\sqrt{a}$,
%we have $\rho_{1}^{2}[u]^{2}-b^{2}\leq0$ and
$u_{2}<x'(t)$ only holds for $t\in [0,\ t^{\star}),$ where $t^{\star}=\frac{-\rho_{0}b}{a}-\frac{\rho_{0}\rho_{1}[u]}{a}\sqrt{\frac{b^{2}-a}{\rho_{1}^{2}[u]^{2}-a}}$ satisfying $x'(t^{\star})=u_{2}$ and $w_{\rho}(t^{\star})={\rho_{0}}\sqrt{\frac{b^{2}-a}{\rho_{1}^{2}[u]^{2}-a}}\geq 0$. That is to say, the entropy condition \eqref{2.66} is only valid for $t\in [0,\ t^{\star}]$.  Item vii) is proved.
The proof of item viii) is similar.}\qed
\end{proof}

\begin{remark}\label{rm34}
 It follows from the first equality of \eqref{2.24} that the global delta shock solution in item 2) of Lemma \ref{lem2.3} does not satisfy the entropy condition \eqref{2.66} if the initial data satisfy  $[\rho]>0, \ [u]\leq 0, ~0<a<\rho_{1}^{2}[u]^{2}$. So do those  initial data such that  $[\rho]<0, \ [u]\leq 0,~ 0<a<\rho_{2}^{2}[u]^{2}$.\qed
\end{remark}
\begin{remark}\label{rm33}
To illustrate the initial data for the case $\rho_{0}=0$ that the Riemann problem admits a global delta shock satisfying \eqref{2.66}, we introduce two curves by the equation $a=\min\{\rho^{2},\ \rho_{1}^{2}\}[u]^{2}$:
 \begin{equation}\label{2.231}
D_{1}\doteq\left\{(u,\rho)~|~
u=u_{1}-\sqrt{\frac{p-p_{1}}{\rho_1}}=u_1-\sqrt{\frac{\rho^{\gamma} -\rho_{1}^{\gamma}}{\rho_1}}\doteq d_{1}(\rho; U_{1}), \ \  \rho>\rho_{1}\right\},
\end{equation}
\begin{equation}\label{2.232}
D_{2}\doteq\left\{(u,\rho)~|~ u=u_{1}-\sqrt{\frac{p_{1}-p}{\rho}}=u_{1}-\sqrt{\frac{\rho_{1}^{\gamma} -\rho^{\gamma}}{\rho}}\doteq d_{2}(\rho; U_{1}), \ \ \rho<\rho_{1}\right\}.
\end{equation}
The curve  $D_1$ lies below $S_1$ as shown in Figure \ref{fig2}. This follows from \eqref{2.231} and \eqref{3.11}, since
\begin{equation}\begin{split}
\sqrt{\frac{(\rho-\rho_{1})(p-p_{1})}{\rho\rho_1}}-\sqrt{\frac{p-p_{1}}{\rho_1}}&
=\frac{\frac{(\rho-\rho_{1})(p-p_{1})}{\rho\rho_1}-\frac{p-p_{1}}{\rho_1}}{\sqrt{\frac{(\rho-\rho_{1})(p-p_{1})}{\rho\rho_1}}+\sqrt{\frac{p-p_{1}}{\rho_1}}}
 \\&=\frac{-(p-p_{1})}{\rho(\sqrt{\frac{(\rho-\rho_{1})(p-p_{1})}{\rho\rho_1}}+\sqrt{\frac{p-p_{1}}{\rho_1}})}<0,  \ \ \ \  \forall \rho>\rho_{1}.
\end{split}
\end{equation}
Similarly, $D_2$ lies above $S_2$. Then $D_1$ and $D_2$ divide the region $\mathrm{IV}$ into three parts:
\begin{center}
$\mathrm{IV}_{0}\doteq$ the region bounded by curves $D_{1}$ and $D_{2}$,
\end{center}
\begin{center}
$\mathrm{IV}_{1}\doteq$ the region bounded by curves $D_{1}$ and $S_{1}$,
\end{center}
\begin{center}
$\mathrm{IV}_{2}\doteq$ the region bounded by curves $D_{2}$ and $S_{2}$.
\end{center}
{Both $D_1$ and $D_2$ are tangential to the line $\{\rho=\rho_1\}$ at the point $U_1$.}

By Proposition \ref{prp1}, there is a unique global delta shock solution satisfying the over-compressing entropy condition \eqref{2.66} with initial weight of mass $\rho_0=0$, for right state $U_2$ lies in the subregion $\mathrm{IV_0}$. The corresponding classical Riemann solution consists of two shocks, each of which is rather strong, in the sense that $U_2$ is separated from the shock curves $S_1\cup S_2$ by the curves $D_1$ and $D_2$. \qed
\end{remark}

%\vspace{-0.5cm}
\begin{figure}[t]
\centering
  \includegraphics[width=0.6\textwidth]{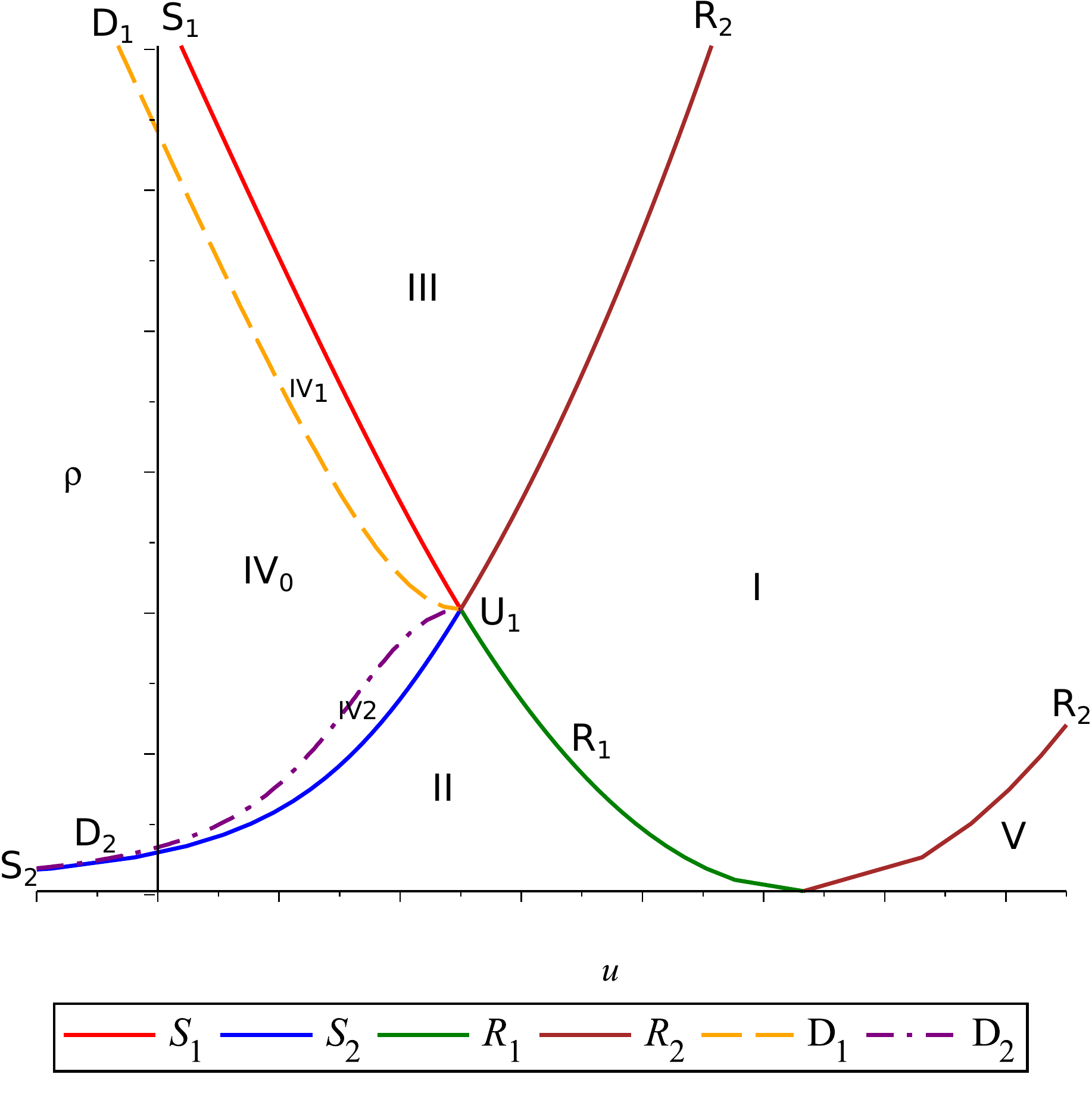}
\caption{
For ${\rho_0=0,}~\mathrm{IV}=\mathrm{IV}_1\cup\mathrm{IV}_0\cup\mathrm{IV}_2$, and for $U_2=(u_2,\rho_2)\in\mathrm{IV}_0$, the global delta shock connecting it with $U_1=(u_1,\rho_1)$ satisfies the over-compressing entropy condition for all the time.}\label{fig2}
\end{figure}

For the local delta shock solution obtained in Lemma \ref{lem2.3}, we have the following conclusions.

\begin{proposition}\label{prp2}
  Consider the Riemann problem \eqref{1.1}\eqref{1.2} for polytropic gases. If the initial data satisfy  $\rho_0>0$ and one of the following conditions:
  \begin{itemize}
    \item[1)]$[\rho]>0, \ u_{2}\leq u_{0}\leq u_{1}, \ 0>a\geq -\rho_{1}^{2}[u]^{2}$;
    \item[2)]$[\rho]<0, \ u_{2}\leq u_{0}\leq u_{1}, \ a\leq-\rho_{1}^{2}[u]^{2}$;
    \item[3)]$[\rho]<0, \ u_{2}\leq u_{0}\leq u_{1}, \ 0>a\geq -\rho_{2}^{2}[u]^{2}$;
    \item[4)]$[\rho]>0, \ u_{2}\leq u_{0}\leq u_{1}, a\leq-\rho_{2}^{2}[u]^{2}$,
  \end{itemize}
then there exists a local delta shock solution and it satisfies \eqref{2.66} for a shorter time.
\end{proposition}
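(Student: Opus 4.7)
The plan is to establish local existence via Lemma~\ref{lem2.3}~4) and then verify the over-compressing condition \eqref{2.66} by tracking the delta-shock speed $x'(t)$ between $t=0$ and the terminal time $t^{*}$.

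\emph{Local existence in all four cases.} First I observe that each of conditions 1)--4) implies $a<0$ and $b\ge 0$. In cases 1) and 3) the inequality $a<0$ is explicit. In cases 2) and 4), $a\le -\rho_j^{2}[u]^{2}\le 0$, while $a=0$ would force $[u]=0$, so that $a=-[\rho][p]\ne 0$ by strict monotonicity of $p=\rho^{\gamma}$ together with $[\rho]\ne 0$, a contradiction. The non-negativity of $b=\rho_{2}(u_{0}-u_{2})+\rho_{1}(u_{1}-u_{0})$ follows from $u_{2}\le u_{0}\le u_{1}$. Since $a<0$ and $[\rho]\ne 0$, item~4) of Lemma~\ref{lem2.3} furnishes a unique local delta shock on $[0,t^{*}]$ with $x(t)=\frac{[\rho u]t-\rho_{0}+\sqrt{\Delta}}{[\rho]}$, $w_{\rho}(t)=\sqrt{\Delta}$, and $t^{*}=\frac{-\rho_{0}b-\rho_{0}\sqrt{b^{2}-a}}{a}>0$.

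\emph{Entropy condition at the endpoints.} Differentiating the explicit formula gives $x'(t)=\frac{[\rho u]\sqrt{\Delta}+at+\rho_{0}b}{[\rho]\sqrt{\Delta}}$. Using $\sqrt{\Delta(0)}=\rho_{0}$ and $b=[\rho]u_{0}-[\rho u]$, I compute $x'(0+)=\frac{[\rho u]+b}{[\rho]}=u_{0}\in [u_{2},u_{1}]$, so \eqref{2.66} holds at $t=0$. Since $\Delta>0$ on $[0,t^{*})$, the function $x'$ is continuous there, and hence \eqref{2.66} persists on some maximal sub-interval $[0,t^{**}]$. To see $t^{**}<t^{*}$, I note that at $t=t^{*}$ one has $\sqrt{\Delta(t^{*})}=0$ while $at^{*}+\rho_{0}b=-\rho_{0}\sqrt{b^{2}-a}<0$; consequently $x'(t^{*}-)=-\infty$ if $[\rho]>0$ (cases 1) and 4)) and $x'(t^{*}-)=+\infty$ if $[\rho]<0$ (cases 2) and 3)). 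In either event the bounded bracket $u_{2}\le x'(t)\le u_{1}$ is eventually violated, and the intermediate value theorem locates $t^{**}\in(0,t^{*})$ with $x'(t^{**})\in\{u_{1},u_{2}\}$ and $u_{2}\le x'(t)\le u_{1}$ throughout $[0,t^{**}]$.

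\emph{Anticipated obstacle.} The principal difficulty is the sign bookkeeping required to pin down $t^{**}$ explicitly in each of the four cases: whether $x'(t)$ first reaches $u_{1}$ or $u_{2}$ depends on how $|a|$ compares to $\rho_{1}^{2}[u]^{2}$ and to $\rho_{2}^{2}[u]^{2}$. Writing $x'(t)-u_{j}$ as a rational function whose numerator is the quadratic $(a-\rho_{j}^{2}[u]^{2})(at+\rho_{0}b)\,t+\rho_{0}^{2}(b^{2}-\rho_{j}^{2}[u]^{2})$ (analogous to \eqref{2.230}), one must check which numerator changes sign first on $(0,t^{*})$ and verify that the denominator $at+\rho_{0}b-\rho_{j}[u]\sqrt{\Delta}$ stays of one sign there. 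The four cases listed in the proposition are precisely those arrangements of signs for which this analysis isolates a single well-defined $t^{**}<t^{*}$.
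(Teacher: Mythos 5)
Your local-existence step coincides with the paper's (check $a<0$ and $b\ge 0$, then invoke item 4) of Lemma~\ref{lem2.3}), but your verification of \eqref{2.66} takes a genuinely different and, I think, cleaner route. The paper rationalizes $x'(t)-u_1$ and $x'(t)-u_2$ into explicit rational expressions such as \eqref{2.230} and argues, case by case, where the quadratic numerators keep a fixed sign; this is precisely where the hypotheses comparing $a$ with $\rho_1^{2}[u]^{2}$ and $\rho_2^{2}[u]^{2}$ enter. You instead evaluate the shock speed at the two endpoints, $x'(0+)=u_0\in[u_2,u_1]$ and $x'(t^{*}-)=\mp\infty$ according to the sign of $[\rho]$, and conclude by continuity. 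This buys generality: your argument uses only $a<0$, $[\rho]\ne 0$ and $u_2\le u_0\le u_1$, so it also covers the band $-\rho_2^{2}[u]^{2}<a<-\rho_1^{2}[u]^{2}$ (for $[\rho]>0$, say) that falls between cases 1) and 4) and is not treated by the proposition. What the paper's computation buys in exchange is quantitative information about which of the two inequalities in \eqref{2.66} fails first and an explicit formula for the failure time. Three small caveats. First, your intermediate-value step silently assumes $t^{**}>0$; in the degenerate boundary case $u_0\in\{u_1,u_2\}$ the speed $x'(0)$ starts on the edge of the admissible bracket and may leave it immediately (one checks that for $u_0=u_2$, $[\rho]>0$, the quantity $x'(t)-u_2$ has negative derivative at $t=0$), so a positive $t^{**}$ is not guaranteed there --- the paper's proof has the same unaddressed degeneracy, so this is not a defect relative to it. Second, in your final paragraph the indices cross: the numerator of $x'(t)-u_1$ involves $\rho_2^{2}[u]^{2}$ and that of $x'(t)-u_2$ involves $\rho_1^{2}[u]^{2}$ (as in the displays of Proposition~\ref{prp1}), not $\rho_j^{2}[u]^{2}$ paired with $u_j$; and the linear coefficient is $2\rho_0 b\,(a-\rho^{2}[u]^{2})$, with a factor $2$ you dropped. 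Third, these are confined to your sketch of the ``obstacle'' and do not affect the soft argument, which already proves the stated conclusion.
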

\begin{proof}
In view of $u_2\leq u_0\leq u_1,$ we have $b\geq 0.$
By item 4) of Lemma \ref{lem2.3}, for
all these items above, there  exist local delta shock solutions given by
$$x(t)=\frac{[\rho u]t-\rho_{0}+\sqrt{\Delta}}{[\rho]},\quad  t\in [0,t^{*}).$$
Then
 $$x'(t)-u_{1}=\frac{(a+\rho_{2}^{2}[u]^{2})at^{2}+2\rho_{2}[u](\rho_{0}\rho_{2}b[u]+a\sqrt{\Delta})t +(\rho_{2}^{2}[u]^{2}-b^{2})\rho_{0}^{2}}{[\rho]\sqrt{\Delta}(\rho_{2}[u]\sqrt{\Delta}+at-\rho_{0}b)}.$$
We now prove item 1). Recall that we have
$\rho_{2}^{2}[u]^{2}-b^{2}\geq 0$ by \eqref{eq53new}.
In view of  $a<0, \ a+\rho_{1}^{2}[u]^{2}\geq 0$ and $[u]<0$, it holds %we have
\begin{equation}\label{3.55}(a+\rho_{2}^{2}[u]^{2})at^{2}+2\rho_{2}[u](\rho_{0}\rho_{2}b[u]+a\sqrt{\Delta})t +(\rho_{2}^{2}[u]^{2}-b^{2})\rho_{0}^{2}\geq 0 \end{equation}
only for $t\in [0,\ t^{**}),$ for some  $t^{**}<t^{*}.$
Therefore, $x'(t)-u_{1}\leq 0$ for $t\in [0,\ t^{**}).$  Similarly, $$x'(t)-u_{2}=\frac{(a+\rho_{1}^{2}[u]^{2})at^{2}+2\rho_{1}[u](\rho_{0}\rho_{1}b[u]+a\sqrt{\Delta})t +(\rho_{1}^{2}[u]^{2}-b^{2})\rho_{0}^{2}}{[\rho]\sqrt{\Delta}(\rho_{1}[u]\sqrt{\Delta}+at-\rho_{0}b)}.$$
One checks that $\rho_{1}^{2}[u]^{2}\leq b^{2}$ under the assumptions of item 1). Thus
we also have $x'(t)-u_2\geq 0$ for $t\in [0,t_{**})$, where $t_{**}<t^{*}.$ So the local delta shock solutions obtained by Lemma \ref{lem2.3} satisfy entropy condition \eqref{2.66} for $t\in [0,\ \min\{t^{**}, \ t_{**}\}).$ Item 1) is proved.

The proofs of items 2)-4) are similar, and thus omitted.\qed %  The cases of items 3) satisfy entropy condition \eqref{2.66} is similar to items 1), the result for the cases of items 4) is similar to items 2). %The local delta shock solutions obtained by Lemma \ref{lem2.3} satisfy entropy condition \eqref{2.66} if $t\in [0,\ t^{**})$ for the case of item 2), and $t\in [0,\ t^{\star\star})$ for the cases of items 3) and 4), where $t^{\star\star}<t^{*}.$\qed
%Item 3) and item 4) can be proved similarly. %And we find out that
%The local delta shock solutions obtained by Lemma \ref{lem2.3} satisfy \eqref{2.66} for $t\in [0,\ t^{\star\star}),$ where $t^{\star\star}=\frac{-\rho_{0}b-\rho_{1}[u]\sqrt{\Delta}}{a}<t^{*}.$
\end{proof}

\begin{proposition}\label{prp3}  For the delta shock solution obtained in Lemma \ref{lem2.3} and  satisfying \eqref{2.66}, % in Proposition \ref{prp1},
the initial data affect the profiles of the solutions in the following way:
\begin{itemize}
  \item[$\alpha)$] for $\rho_{0}=0$, each solution obtained in Lemma \ref{lem2.3} is self-similar; % if $[u]<0$, $[\rho]=0$, %$u_{0}=\frac{u_{1}+u_{2}}{2}$,      or if $a\geq0,~[\rho]\neq0.$%,~u_{0}=\frac{[\rho u]+\sqrt{a}}{[\rho]}$.
  \item[$\beta)$] for $\rho_{0}>0$,
\begin{itemize}
  \item[a)] the delta shock front is convex if the initial data satisfy in addition one of the following three conditions: ~i)~ %$[u]<0$,
      $[\rho]=0$ and $u_2\leq u_0<\frac{u_{1}+u_{2}}{2}$; ~ii)~ $[\rho]\ne0$ and %$[u]<0, ~{\blue
      $u_2\leq u_{0}<\min\{u_1, \frac{[\rho u]+\sqrt{a}}{[\rho]}\}$;%,$\\$~[\rho]>0$.
  \item[b)] the delta shock front is concave if the initial data satisfy  in addition one of the following three conditions: ~i)~ %$[u]<0$,
      $[\rho]=0$ and $u_1\geq u_0>\frac{u_{1}+u_{2}}{2}$; ~ii)~ $[\rho]\ne0$  and %$[u]<0$, ${\blue
      $u_1\geq u_{0}>\max\{u_2, \frac{[\rho u]+\sqrt{a}}{[\rho]}\}.$%,$\\$~[\rho]>0$.
\end{itemize}
\end{itemize}
\end{proposition}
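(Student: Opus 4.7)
For part $\alpha$), the claim is nearly immediate from Lemma \ref{lem2.3}: when $\rho_{0}=0$, items~1) and~2) give a delta shock front $x(t)=ct$ through the origin with weight $w_{\rho}(t)=c't$ linear in $t$, while the flanking states $U_1,U_2$ are constants. Hence $(\varrho(t),u(t))$ depends only on the self-similar variable $x/t$.

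For part $\beta$), the central step is to transform \eqref{2.16} into a scalar ODE for $v(t)\doteq x'(t)$. Differentiating \eqref{2.16} in $t$ and using $w_\rho(t)=[\rho]x(t)-[\rho u]t+\rho_0$ yields
\begin{equation*}
w_\rho(t)\,x''(t)=Q(v),\qquad Q(v)\doteq-[\rho]v^{2}+2[\rho u]v-[\rho u^{2}+p],
\end{equation*}
with initial value $x'(0)=u_0$ coming from $w_m(0)=w_\rho(0)\,x'(0)$. The roots of $Q$ coincide with the $\rho_0=0$ delta-shock speeds: for $[\rho]\neq 0$ they are $v_\pm=([\rho u]\pm\sqrt{a})/[\rho]$, and for $[\rho]=0$ the unique root is $v^{*}=(u_1+u_2)/2$. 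Using the entropy inequalities of Proposition \ref{prp1}, I would verify the orderings $v_-\le u_2\le v_+\le u_1$ when $[\rho]>0$, $u_2\le v_+\le u_1\le v_-$ when $[\rho]<0$, and $v^{*}\in(u_2,u_1)$ when $[\rho]=0$. The hypothesis $u_2\le u_0<\min(u_1,v_+)$ then places $u_0$ strictly on the side of $v_+$ (respectively $v^{*}$) where $Q>0$, giving $x''(0)>0$; symmetrically, $\max(u_2,v_+)<u_0\le u_1$ yields $x''(0)<0$.

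To propagate the sign of $x''$ to all $t$ in the existence interval, I would read $w_\rho x''=Q(v)$ as the scalar ODE $v'=Q(v)/w_\rho(t)$ with $w_\rho>0$, in which $v_+$ (or $v^{*}$) is a one-sided attractor and the flow is monotone on each side of it. The main obstacle is making the monotone-trapping argument rigorous. For $[\rho]=0$ the ODE is linear, $(v-v^{*})'=(2\rho_1[u]/w_\rho(t))(v-v^{*})$, so $v-v^{*}$ preserves the sign of $u_0-v^{*}$ by an explicit exponential integral; combined with $[u]<0$ this gives $\operatorname{sgn}Q(v(t))=-\operatorname{sgn}(u_0-v^{*})$ for all $t$. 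For $[\rho]\neq 0$ the ODE is Riccati; a partial-fraction separation of variables produces the first integral
\begin{equation*}
\Bigl|\tfrac{v(t)-v_+}{v(t)-v_-}\Bigr|=\Bigl|\tfrac{u_0-v_+}{u_0-v_-}\Bigr|\exp\Bigl(-2\sqrt{a}\int_{0}^{t}\tfrac{\dd s}{w_\rho(s)}\Bigr),
\end{equation*}
from which the strict sign of $v(t)-v_+$ is preserved on any interval with $w_\rho>0$, and hence so is $\operatorname{sgn}Q(v(t))=\operatorname{sgn}x''(t)$. All remaining algebraic orderings of $v_\pm$ against $u_1,u_2$ reduce to manipulations already performed in the proofs of Propositions \ref{prp1} and \ref{prp2}.
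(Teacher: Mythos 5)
Your argument is correct, but it takes a genuinely different route from the paper's. The paper simply differentiates the closed-form front from Lemma~\ref{lem2.3} twice: for $[\rho]=0$ it computes $x''(t)=\rho_1\rho_{0}^{2}[u](u_{1}+u_{2}-2u_{0})/(\rho_1 [u]t-\rho_{0})^{3}$, and for $[\rho]\neq0$, using $x(t)=([\rho u]t-\rho_{0}+\sqrt{\Delta})/[\rho]$, it finds $x''(t)=\rho_{0}^{2}(\sqrt{a}+b)(\sqrt{a}-b)/([\rho]\Delta^{3/2})$; since the only $t$-dependence sits in the positive factor $\Delta^{3/2}$, the sign of $x''$ is manifestly constant in $t$ and is read off from $\sqrt{a}-b=\sqrt{a}+[\rho u]-[\rho]u_{0}$ together with $b\geq0$ (which follows from $u_2\leq u_0\leq u_1$). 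Your reduction $w_\rho\,x''=Q(x')$ with $Q(v)=-[\rho]v^{2}+2[\rho u]v-[\rho u^{2}+p]$ is a correct consequence of differentiating \eqref{2.16}, the identification of the roots of $Q$ with the $\rho_0=0$ delta-shock speeds is right, and your first integral does forbid $v(t)$ from crossing $v_\pm$ wherever $w_\rho>0$, so the sign of $x''$ propagates. What this buys is a structural explanation the paper's computation hides: the constant-state speeds $v_\pm$ (resp.\ $v^{*}$) act as barriers for $x'(t)$, so the $\rho_0>0$ front bends monotonically toward the self-similar $\rho_0=0$ speed; the cost is an extra propagation step that the explicit formula makes unnecessary. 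Two points to tidy up: (a) when $[\rho]\neq0$ the hypotheses force $a>0$ (if $a=0$, then $b\geq0$ is incompatible with the strict inequality $u_0<[\rho u]/[\rho]$ for $[\rho]>0$, and symmetrically for $[\rho]<0$), so the degenerate double-root case of your Riccati first integral never arises, but this should be stated; (b) the full orderings $v_-\leq u_2\leq v_+\leq u_1$ are more than you need --- $Q(u_0)=(a-b^{2})/[\rho]>0$ already follows from $a>0$, $b\geq0>-\sqrt{a}$ and $u_0<v_+$, which matches the paper's factorization $(\sqrt{a}+b)(\sqrt{a}-b)$.
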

\begin{proof}
Item $\alpha$) follows directly  from the expression of $x(t)$ in items 1) and 2) of Lemma \ref{lem2.3}.

We now prove item $\beta$). For $\rho_{0}>0$, if $[\rho]=0$, we have $x''(t)=\frac{\rho_1\rho_{0}^{2}[u](u_{1}+u_{2}-2u_{0})}{(\rho_1 [u]t-\rho_{0})^{3}}$. Since $[u]<0$, it is obvious that
\begin{equation*}
x''(t)\left \{
\begin{split}
&>0, \ \ \ &{\rm if~}u_{0}<\frac{u_{1}+u_{2}}{2}, \\
&=0, \ \ \ &{\rm if~}u_{0}=\frac{u_{1}+u_{2}}{2}, \\
&<0, \ \ \ &{\rm if~}u_{0}>\frac{u_{1}+u_{2}}{2}.
\end{split} \right.
\end{equation*}
This completes the proof of the cases i) of a) and i) of b) in item $\beta$).

If $[\rho]\neq0$, we have { $x''(t)%=\frac{\rho_{0}^{2}(a-b^{2})}{[\rho]\Delta}
=\frac{\rho_{0}^{2}(\sqrt{a}+b)(\sqrt{a}-b)}{[\rho]\Delta^{3/2}}=\frac{\rho_{0}^{2}(\sqrt{a}+b)}{[\rho]\Delta^{3/2}}(\sqrt{a}+[\rho u]-[\rho]u_0)$. Due to $u_2<u_0<u_1$, and then $b\geq 0$, we have %From \eqref{2.66}, there is $b=[\rho]u_{0}-[\rho u]\geq 0.$ } By a simple calculation,
\begin{equation*}
x''(t)\left \{
\begin{split}
&>0, \quad{\rm if~}%[\rho]
u_{0}<\frac{[\rho u]+\sqrt{a}}{[\rho]}, \\
&=0, \quad{\rm if~}u_{0}=\frac{[\rho u]+\sqrt{a}}{[\rho]}, \\
&<0, \quad{\rm if~}%[\rho]
u_{0}>\frac{[\rho u]+\sqrt{a}}{[\rho]}.
\end{split} \right.
\end{equation*}
Combining it with the assumption $u_2\leq u_0\leq u_1$, we complete the proofs of the other items.}\qed %and the conclusions of the proposition follows. \footnote{Check if the proof is complete.}
\end{proof}

\section{Radon measure solutions to Riemann problems}\label{s3}
Based upon analysis in the previous section on a single delta shock solution, we now study the existence of (Radon measure) solutions to the Riemann problems  \eqref{1.1}\eqref{1.2}. There are two cases: for $\rho_0=0$, we call it a classical Riemann problem; for $\rho_0>0$, we call it a generalized or singular Riemann problem.

\subsection{The classical Riemann problems}\label{s3.1}

The classical Riemann solutions consist of only two kinds of elementary waves: rarefaction waves and shock waves, and the latter correspond to delta shocks with trivial weight of mass. We now focus on the existence of Radon measure solution consisting of delta shocks with positive mass weights. As before, we denote $(u_{i},\rho_{i}) $ by $U_{i}\ (i=1,2).$

%{\blue This paper only takes the case of $u_{1}^{2}<\gamma \rho_{1}^{\gamma-1}$ with $u_1\geq 0$ as an example, and other situations are similar.}

\begin{theorem}\label{thm3.1}
Given $U_{1}$, %{\blue with $u_{1}^{2}<\gamma \rho_{1}^{\gamma-1}, u_1\geq 0$},
for the Riemann problem \eqref{1.1}\eqref{1.01}, we have the following conclusions:
  \begin{itemize}
    \item[1)] %if $U_{2}\in \mathrm{IV}_0$, then
    there is a unique global solution consisting of a single delta shock if and only if $U_2\in(\mathrm{I_{0}}\setminus\{(u,\rho)|\rho=\rho_1, u>u_1\})\cup \mathrm{IV}$. However, only those with $U_2\in\mathrm{IV_{0}}$ %$U_2\in \mathrm{IV}_0$
     satisfy the over-compressing condition \eqref{2.66};
    \item[2)] if $U_{2}\in \mathrm{IV}_1\cup \mathrm{III}$, %$U_{2}\in \mathrm{IV}_1\cup \mathrm{III}$,
        then there are {infinitely global solutions satisfying \eqref{2.66}. Each of these solutions consists} of a delta shock followed by a rarefaction wave $R_2$;
    \item[3)] if $U_{2}\in \mathrm{IV}_2\cup \mathrm{II}$, %$U_{2}\in \mathrm{IV}_2\cup \mathrm{II}$,
        then there are  infinitely global solutions satisfying \eqref{2.66}. Each consists of a rarefaction wave $R_1$ followed by a delta shock;
    \item[4)] if $U_{2}$ belongs to the other part of the upper-half $(u,\rho)$-plane, %\in \mathrm{I}\cup \mathrm{V}$,
    then there is no solution consisting of one or two waves (one of which is a delta shock), even for a short time and without the restriction of \eqref{2.66}.
  \end{itemize}
\end{theorem}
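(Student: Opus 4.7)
The strategy is to combine the single-delta-shock analysis of Lemma \ref{lem2.3} (specialized to $\rho_0=0$) with the classical rarefaction wave curves $R_1, R_2$ in the Appendix, and to exploit the planar geometry of the curves $S_1, S_2, S_{11}, S_{22}, R_1, R_2, D_1, D_2$ exhibited in Figures \ref{fig22} and \ref{fig2}. For item 1), I would read off Lemma \ref{lem2.3} with $\rho_0=0$: the existence criterion $a \geq 0$, together with the sign of $[u]$ when $[\rho]=0$ and with the non-negativity of $w_{\rho}$, translates to $U_2$ lying in the set enclosed by $S_1\cup S_2$ on the left and $S_{11}\cup S_{22}$ on the right of $U_1$, i.e.\ $(\mathrm{I}_0\setminus\{\rho=\rho_1,\ u>u_1\})\cup \mathrm{IV}$; uniqueness is then immediate from the explicit formula for $x(t)$ in \eqref{2.228a}. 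The over-compressing refinement would be obtained by invoking items i)--iii) of Proposition \ref{prp1} combined with Remark \ref{rm33}: the additional constraint $a\ge\min\{\rho_1^2,\rho_2^2\}[u]^2$ is exactly what carves out $\mathrm{IV}_0$ from $\mathrm{IV}$ via the curves $D_1, D_2$.

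For items 2) and 3), I would construct two-wave solutions of the form
\[
U_1 \ \xrightarrow{\delta\text{-shock}}\ U_m=(u_m,\rho_m) \ \xrightarrow{R_2}\ U_2
\]
(and symmetrically $R_1$ on the left followed by a delta shock on the right for item 3)), with $U_m$ a constant intermediate state. The admissibility requirements on $U_m$ are threefold: $U_m$ must lie on the backward $R_2$-curve through $U_2$, which is a smooth monotone curve in the $(u,\rho)$-plane; $U_m$ must lie in the entropy-admissible delta-shock region $\mathrm{IV}_0(U_1)$, so that item 1) applies to the pair $(U_1,U_m)$; and the delta-shock speed must be strictly smaller than the left-edge characteristic speed $u_m-\sqrt{\gamma}\,\rho_m^{(\gamma-1)/2}$ of the $R_2$-fan, so that the two waves do not overlap. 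The content of items 2) and 3) is then the geometric claim that whenever $U_2$ lies in $\mathrm{IV}_1\cup\mathrm{III}$ (respectively $\mathrm{IV}_2\cup\mathrm{II}$), the backward $R_2$-curve (respectively $R_1$-curve) through $U_2$ traverses a nontrivial arc inside $\mathrm{IV}_0(U_1)$, yielding a one-parameter family of admissible $U_m$ and hence infinitely many solutions.

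For item 4), $U_2$ lies in $\mathrm{V}$ together with the excluded half-line $\{\rho=\rho_1,\ u>u_1\}$. I would rule out a single delta shock immediately from item 1), since $\mathrm{V}$ is disjoint from $(\mathrm{I}_0\setminus\{\cdots\})\cup\mathrm{IV}$. For the two-wave combinations, I would argue that any admissible intermediate state $U_m\in\mathrm{IV}_0(U_1)$ sends its forward $R_2$-fan into a region that, by direct comparison against the curves $S_{22}$ and $R_2$ through $U_1$, lies strictly outside $\mathrm{V}$; a symmetric argument eliminates $R_1$ plus delta shock combinations. The main obstacle is precisely this non-existence step: one must verify, via monotonicity comparisons analogous in spirit to the calculation in \eqref{eq49new}, that the union over all $U_m\in\mathrm{IV}_0(U_1)$ of the forward $R_2$-curves from $U_m$ (and symmetrically for $R_1$) genuinely misses $\mathrm{V}$. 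Once this envelope computation is in hand, the theorem assembles from items 1)--3).
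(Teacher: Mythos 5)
Your overall architecture coincides with the paper's: item 1) from Lemma \ref{lem2.3} and Proposition \ref{prp1}, items 2)--3) via an intermediate state $\widetilde{U}=(u_m,\rho_m)$ on the backward rarefaction curve through $U_2$ intersected with $\mathrm{IV}_0(U_1)$, and item 4) by an exclusion argument on rarefaction curves. But there are concrete problems. First, in item 2) your non-overlap condition uses the wrong characteristic family: the left edge of the trailing $2$-rarefaction fan joining $U_m$ (left) to $U_2$ (right) travels with speed $\lambda_2(U_m)=u_m+c_m=u_m+\sqrt{\gamma}\,\rho_m^{(\gamma-1)/2}$, so the correct requirement is $x'(t)\le u_m+c_m$, which is the boxed condition $x'(t)\le\lambda_2(\widetilde{U})$ in the paper's claim ($\maltese$). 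The condition you wrote, $x'(t)<u_m-\sqrt{\gamma}\,\rho_m^{(\gamma-1)/2}=\lambda_1(U_m)$, is incompatible with the over-compressing condition $x'(t)\ge u_m$ that you simultaneously impose via $U_m\in\mathrm{IV}_0(U_1)$; taken literally, your admissible set of intermediate states is empty and the construction produces no solutions.

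Second, the assertion that the backward $R_2$-curve through $U_2$ ``traverses a nontrivial arc inside $\mathrm{IV}_0(U_1)$'' on which the speed constraint also holds is precisely where the work lies: the paper proves $R_2^{*}(U_2)\cap\mathrm{IV}_0\ne\emptyset$ by a monotonicity and intermediate-value argument, converts $x'(t)\le\lambda_2(\widetilde{U})$ into the explicit inequalities \eqref{2.993}, \eqref{2.995} and \eqref{2.994}, introduces the curves $M_1,M_2$, and verifies via \eqref{2.998} that $M_2$ lies below $D_2$ so that the constraint does not empty out the admissible arc; none of this verification appears in your proposal. Third, item 4) concerns the complement of $\mathrm{I}_0\cup\mathrm{II}\cup\mathrm{III}\cup\mathrm{IV}$, which contains all of $\mathrm{I}\setminus\mathrm{I}_0$ in addition to $\mathrm{V}$ and the excluded half-line; you only address $\mathrm{V}$. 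The substantive part of the paper's proof of item 4) is for $U_2\in\mathrm{I}$: since the $2$-rarefaction curves foliate region $\mathrm{I}$, it suffices to show that $R_2^{*}(U_1)$ lies below $S_2$, hence below $D_2$, so the forward $2$-rarefaction curves issuing from $\mathrm{IV}_0$ never reach region $\mathrm{I}$ (and symmetrically for $R_1+\delta$). Your ``envelope'' computation is the right idea but is aimed at the wrong region, so the case that actually requires proof is left untreated.
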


\begin{proof}
Item 1) follows directly from Lemma \ref{lem2.3} and Proposition \ref{prp1}.

We now define, for $\overline{U}=(\overline{u}, \ \overline{\rho})\in \Bbb R\times\mathbb{R}^+$, and $i=1,2$,  the curves
\begin{align*}
&D_{i}(\overline{U})\doteq\{(u,\ \rho)~|~ u=d_{i}(\rho; \overline{U}),\ \ u<\overline{u}\},\\
&R_{i}(\overline{U})\doteq\{(u,\ \rho)~|~ u=r_{i}(\rho; \overline{U}),\ \ u>\overline{u}\},\\
&R_{i}^*(\overline{U})\doteq\{(u,\ \rho)~|~ u=r_{i}(\rho; \overline{U}),\ \ u<\overline{u}\},
\end{align*}
where $d_i(\rho;\bar{U})$ are given by \eqref{2.231}\eqref{2.232}, $r_i(\rho;\bar{U})$ by \eqref{3.17}\eqref{3.18}, and for $R_i^{*}(\bar{U})$, see \eqref{eq69}\eqref{eq70}.

For given $U_{1}\in \Bbb R\times \mathbb{R}^{+},$ and any $U_{2}\in \mathrm{IV}_1\cup \mathrm{III}$ (these regions depend on $U_1$), along the curve $R_2^{*}(U_2)$, $u$ is increasing with respect to increasing $\rho$,  and  $\rho$ monotonically decreases to $0$ as $u$ decreases to $-\infty$. Also notice that $u=d_1(\rho,U_1)$ is decreasing with respect to $\rho$, and it monotonically  goes to $-\infty$ as $\rho$ increases to $+\infty$, while $r_2(\rho, U_1)$ is increasing with respect to $\rho$ and goes to $+\infty$ as  $\rho$ goes to $+\infty$. Then
the horizontal line $\{\rho=\rho_{2}\}$ meets $D_{1}(U_{1})$ at a point $A$, and $R_{2}(U_{1})$ at a point $B$ (see Figure \ref{fig4}). By continuity, we know that the curve $ R_{2}^{*}(U_{2})$ with $u<u_2$ will penetrate into the domain $\mathrm{IV}_0$ and thus $R^{*}_2(U_2)\cap \mathrm{IV}_0\neq \emptyset$.
We claim that:
\begin{quote}
($\maltese$):\quad There are infinite $\widetilde{U}\in R_2^{*}(U_2)\cap \mathrm{IV}_0$ such that the delta shock $x=x(t)$ connecting $U_1$ and $\widetilde{U}$ satisfies $\boxed{x'(t)\leq\lambda_2(\tilde{U})}$ and the entropy condition \eqref{2.66}.
\end{quote}
If it is true, then as $\widetilde{U}$ can be connected to $U_2$ by a $2$-rarefaction wave,  the solution to Riemann problem \eqref{1.1}\eqref{1.01} consists of a delta shock followed by a 2-rarefaction wave $R_2$,  written shortly as $\delta+R_2$.  It follows from this claim that there are infinitely global solutions satisfying \eqref{2.66}.

%
%for any point $\widetilde{U}\in R_2^{-1}(U_2)\cap \mathrm{IV}_0$, %which satisfies $x'(t)\leq\lambda_{2}(\widetilde{U})$,
%$U_1$ can be connected to $\widetilde{U}$ by a delta shock $x=x(t)$, satisfying $x'(t)\leq\lambda_2(\tilde{U})$, while $\widetilde{U}$ is connected to $U_2$ by a $R_2$ wave, written shortly as $\delta+R_2$.  Therefore, we found infinitely global solutions satisfying \eqref{2.66}. See Figure \ref{fig4}.
%\vspace{-0.5cm}
\begin{figure}[t]
\centering
  \includegraphics[width=0.6\textwidth]{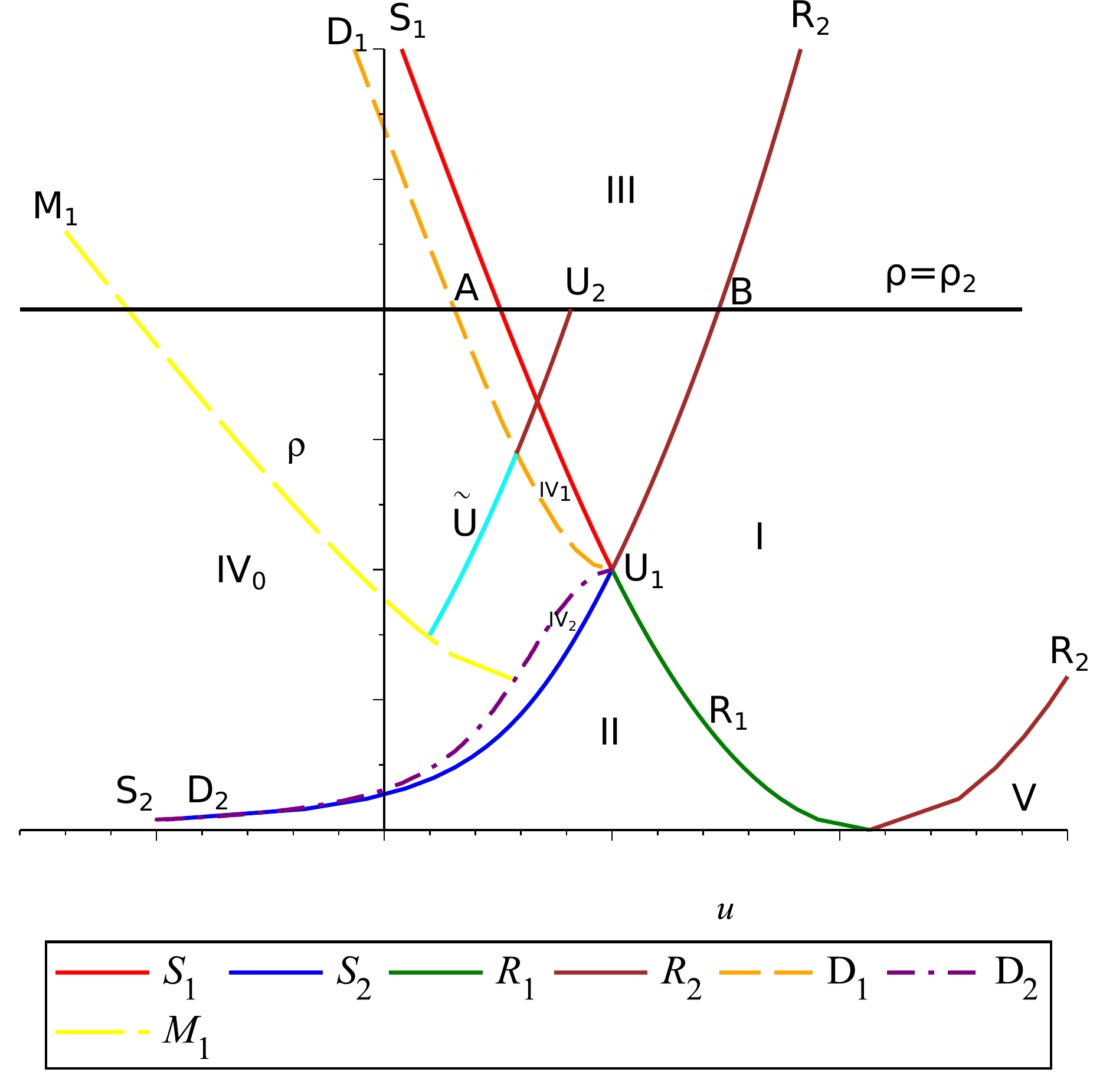}
\caption{The case of $U_{2}$ in region $\mathrm{IV}_1\cup \mathrm{III}$: delta shock followed by a rarefaction wave.  }\label{fig4}
\end{figure}

We are left to prove the claim ($\maltese$). By item 1), for all $\widetilde{U}\doteq(u_m,\rho_m)\in R_2^{*}(U_2)\cap \mathrm{IV}_0$, the delta shock satisfies the entropy condition \eqref{2.66}. So we only need to show that there are infinite $\widetilde{U}$ satisfy $x'(t)\leq\lambda_2(\tilde{U})$. Recalling from Lemma \ref{lem2.3} that  $x'(t)=\frac{[\rho u]+\sqrt{a}}{[\rho]}$, it is reduced to  $\frac{[\rho u]+\sqrt{a}}{[\rho]}\leq u_{m}+c_{m}$ for $u_m\le u_1$. Notice here that for
$[\rho]=\rho_{m}-\rho_{1}>0,$ and $[\rho u]=\rho_mu_m-\rho_1u_1$, this is equivalent to $$\sqrt{a}\leq[\rho](u_{m}+c_{m})-[\rho u]=[\rho]c_{m}-\rho_{1}[u],$$
or, remembering that $a>0$ and $[u]=u_m-u_1<0$ in $\mathrm{IV}_0$, $$a=\rho_{1}\rho_{m}[u]^2-[\rho][p]\leq([\rho]c_{m}-\rho_{1}[u])^2=[\rho]^{2}c_{m}^{2}-2\rho_{1}c_{m}[\rho][u]+\rho_{1}^{2}[u]^2,$$
which is simplified to  $$\rho_{1}[\rho][u]^2-[\rho][p]\leq[\rho]^{2}c_{m}^{2}-2\rho_{1}c_{m}[\rho][u].$$
For $[\rho]>0,$ it reads  \begin{equation}\label{2.992}\rho_{1}[u]^2-[p]\leq[\rho]c_{m}^{2}-2\rho_{1}c_{m}[u].\end{equation}
The solution to \eqref{2.992}, together with $u_m<u_1$, gives
\begin{equation}\label{2.993}u_1-\sqrt{\gamma\rho_{m}^{\gamma-1}}-\sqrt{\frac{(\gamma+1)\rho_{m}^{\gamma}-\rho_{1}^{\gamma}}{\rho_{1}}}\leq u_m\leq  u_1 , \ \ \  \rho_{m}>\rho_{1}.\end{equation}
For $[\rho]=\rho_{m}-\rho_{1}<0$, similar to the case of $[\rho]>0$, we obtain that $\widetilde{U}\in R_2^{*}(U_2)\cap \mathrm{IV}_0$ satisfies $x'(t)\leq\lambda_2(\tilde{U})$ if and only if
\begin{align}\label{2.995}  u_1-\sqrt{\gamma\rho_{m}^{\gamma-1}}-\sqrt{\frac{(\gamma+1)\rho_{m}^{\gamma}-\rho_{1}^{\gamma}}{\rho_{1}}}\leq &u_m\leq u_1-\sqrt{\gamma\rho_{m}^{\gamma-1}}+\sqrt{\frac{(\gamma+1)\rho_{m}^{\gamma}-\rho_{1}^{\gamma}}{\rho_{1}}},\nonumber \\ \frac{\rho_1}{(\gamma+1)^{1/\gamma}}\leq&\rho_m<\rho_1.\end{align}
For $[\rho]=0$, we require that $\frac{u_{1}+u_{m}}{2}\leq u_{m}+c_{m}$, i.e.,  \begin{equation}\label{2.994} u_1\geq u_m\geq u_1-2c_m=u_1-2c_1.\end{equation}
It is observed that $u_1-\sqrt{\gamma\rho_{m}^{\gamma-1}}-\sqrt{\frac{(\gamma+1)\rho_{m}^{\gamma}-\rho_{1}^{\gamma}}{\rho_{1}}}\rightarrow u_1-2c_1$ as $\rho_{m}\rightarrow\rho_{1},$  and $u_1-\sqrt{\gamma\rho_{m}^{\gamma-1}}+\sqrt{\frac{(\gamma+1)\rho_{m}^{\gamma}-\rho_{1}^{\gamma}}{\rho_{1}}}\rightarrow u_1$ as $\rho_{m}\rightarrow\rho_{1}.$ Therefore, \eqref{2.993}, \eqref{2.995} and \eqref{2.994} are consistent.

To clarify the region defined by \eqref{2.993}, \eqref{2.995} and \eqref{2.994}, it is natural to define the curves
\begin{align}
&M_1\doteq\left\{(u,\rho)~|~ u=u_1-\sqrt{\gamma\rho^{\gamma-1}}-\sqrt{\frac{(\gamma+1)\rho^{\gamma} -\rho_{1}^{\gamma}}{\rho_{1}}}, \ \ \rho\ge \frac{\rho_1}{(\gamma+1)^{1/\gamma}}\right\},\\
&M_2\doteq\left\{(u,\rho)~|~ u=u_1-\sqrt{\gamma\rho^{\gamma-1}}+\sqrt{\frac{(\gamma+1)\rho^{\gamma} -\rho_{1}^{\gamma}}{\rho_{1}}}, \ \ \frac{\rho_1}{(\gamma+1)^{1/\gamma}}\leq\rho\leq\rho_1\right\}.\label{2.996}\end{align}

We now prove that $M_2$ lies below $D_2$, hence it is not drawn in Figure \ref{fig4}. To this end, by \eqref{2.996} and \eqref{2.232}, we need to show that
\begin{equation}\label{2.998} \begin{split}
&\sqrt{\frac{(\gamma+1)\rho^{\gamma} -\rho_{1}^{\gamma}}{\rho_{1}}}-\sqrt{\gamma\rho^{\gamma-1}}+\sqrt{\frac{\rho_{1}^{\gamma}-\rho^{\gamma}}{\rho}}
\\=&\frac{\{(\gamma+1)\rho^{\gamma}-\rho_{1}^{\gamma}\}(\rho-\rho_1)+2\rho\rho_1\sqrt{\frac{\{(\gamma+1)\rho^{\gamma} -\rho_{1}^{\gamma}\}(\rho_{1}^{\gamma}-\rho^{\gamma})}{\rho\rho_{1}}}}{\rho\rho_{1}\left(\sqrt{\frac{(\gamma+1)\rho^{\gamma} -\rho_{1}^{\gamma}}{\rho_{1}}}+\sqrt{\frac{\rho^{\gamma}-\rho_{1}^{\gamma}}{\rho}}+\sqrt{\gamma\rho^{\gamma-1}}\right)}>0.
\end{split}\end{equation}
Since the denominator is positive, it is equivalent to prove the numerator is positive.
%\begin{equation}\label{2.999} \begin{split}
%b(\rho)&=\{(\gamma+1)\rho^{\gamma}-\rho_{1}^{\gamma}\}(\rho-\rho_1)+2\rho\rho_1\sqrt{\frac{\{(\gamma+1)\rho^{\gamma} -\rho_{1}^{\gamma}\}(\rho_{1}^{\gamma}-\rho^{\gamma})}{\rho\rho_{1}}}>0.
%\end{split}\end{equation}
That is,
\begin{equation}\label{2.997} \begin{split}
K(\rho)&\doteq\{(\gamma+1)\rho^{\gamma}-\rho_{1}^{\gamma}\}(\rho-\rho_1)^{2}-4\rho\rho_1(\rho_{1}^{\gamma}-\rho^{\gamma})
\\&=\gamma\rho^{\gamma}(\rho-\rho_1)^{2}+(\rho+\rho_1)^{2}(\rho^{\gamma}-\rho_{1}^{\gamma})<0.
\end{split}\end{equation}
In fact, it follows from the fact that, for $\frac{\rho_1}{(\gamma+1)^{1/\gamma}}\leq\rho\leq\rho_1,$ there holds
\begin{align}\label{2.9997}
K'(\rho)&=\gamma^{2}\rho^{\gamma-1}(\rho-\rho_1)^{2}+2\gamma\rho^{\gamma}(\rho-\rho_1)\nonumber\\&\qquad+\gamma\rho^{\gamma-1}(\rho+\rho_1)^{2}+2(\rho^{\gamma}-\rho_{1}^{\gamma})(\rho+\rho_1)
\nonumber\\&=\gamma(\gamma+1)\rho^{\gamma-1}(\rho-\rho_1)^{2}+2\{(\gamma+1)\rho^{\gamma}-\rho_{1}^{\gamma}\}(\rho+\rho_1)>0,
\end{align}
and $K(\rho_1)=0.$

In summary, for given $U_1$, and any $U_{2}\in\mathrm{IV}_1\cup \mathrm{III}$, $U_1$ can be connected to any $\widetilde{U}=(u_{m},\rho_{m})\in (R_2^{*}(U_2)\cap \mathrm{IV}_0\cap Q_1)\subset \mathrm{IV}_0$ by a delta shock, while $\widetilde{U}$ is connected to $U_2$ by a 2-rarefaction wave, where \begin{align}\label{2.988} Q_1\doteq&\left\{(u,\rho)~|~u_1-\sqrt{\gamma\rho^{\gamma-1}} -\sqrt{\frac{(\gamma+1)\rho^{\gamma} -\rho_{1}^{\gamma}}{\rho_{1}}}\leq u\leq u_1,\ \rho\geq\rho_1\right\}\nonumber\\
&\bigcup \left\{(u,\rho)~|~u_1-\sqrt{\gamma\rho^{\gamma-1}} -\sqrt{\frac{(\gamma+1)\rho^{\gamma} -\rho_{1}^{\gamma}}{\rho_{1}}}\leq u\leq u_1-\sqrt{\gamma\rho^{\gamma-1}} \right.\nonumber\\&\qquad\left.+\sqrt{\frac{(\gamma+1)\rho^{\gamma} -\rho_{1}^{\gamma}}{\rho_{1}}} ,\ \frac{\rho_1}{(\gamma+1)^{1/\gamma}}\leq\rho\leq\rho_1\right\}. \end{align} %is the region surrounded by $M_1, \ \rho=\rho_2, \ D_1, \ D_2, \ \ M_2.$ See Figure \ref{fig4}.  The curve $M_2$ and $D_2$ intersect at $\rho_m=\frac{\rho_1}{(\gamma+1)^{1/\gamma}}.$
In Figure \ref{fig4},  $\widetilde{U}$ is taken on the part of the curve $R_2^{*}(U_2)$ in the region surrounded by $M_1,  \ D_2 , \ D_1$ and the line $\{\rho=\rho_2\}$. Obviously there are infinite such $\widetilde{U}$.  This proves the claim $(\maltese)$ and hence item 2).
\smallskip

The proof of item 3) is similar to that of item 2), see Figure \ref{fig41}. For fixed $U_2\in\mathrm{II}\cup\mathrm{IV}_2$, we show that there are infinite $\widetilde{U}=(u_m,\ \rho_{m})\in R_1(U_1)$, such that the delta shock connecting $\tilde{U}$ and $U_2$ satisfies $x'(t)\geq \lambda_{1}(\widetilde{U})=u_m-c_m$ and the entropy condition \eqref{2.66}.
%\vspace{-0.5cm}
\begin{figure}[t]
\centering
  \includegraphics[width=0.6\textwidth]{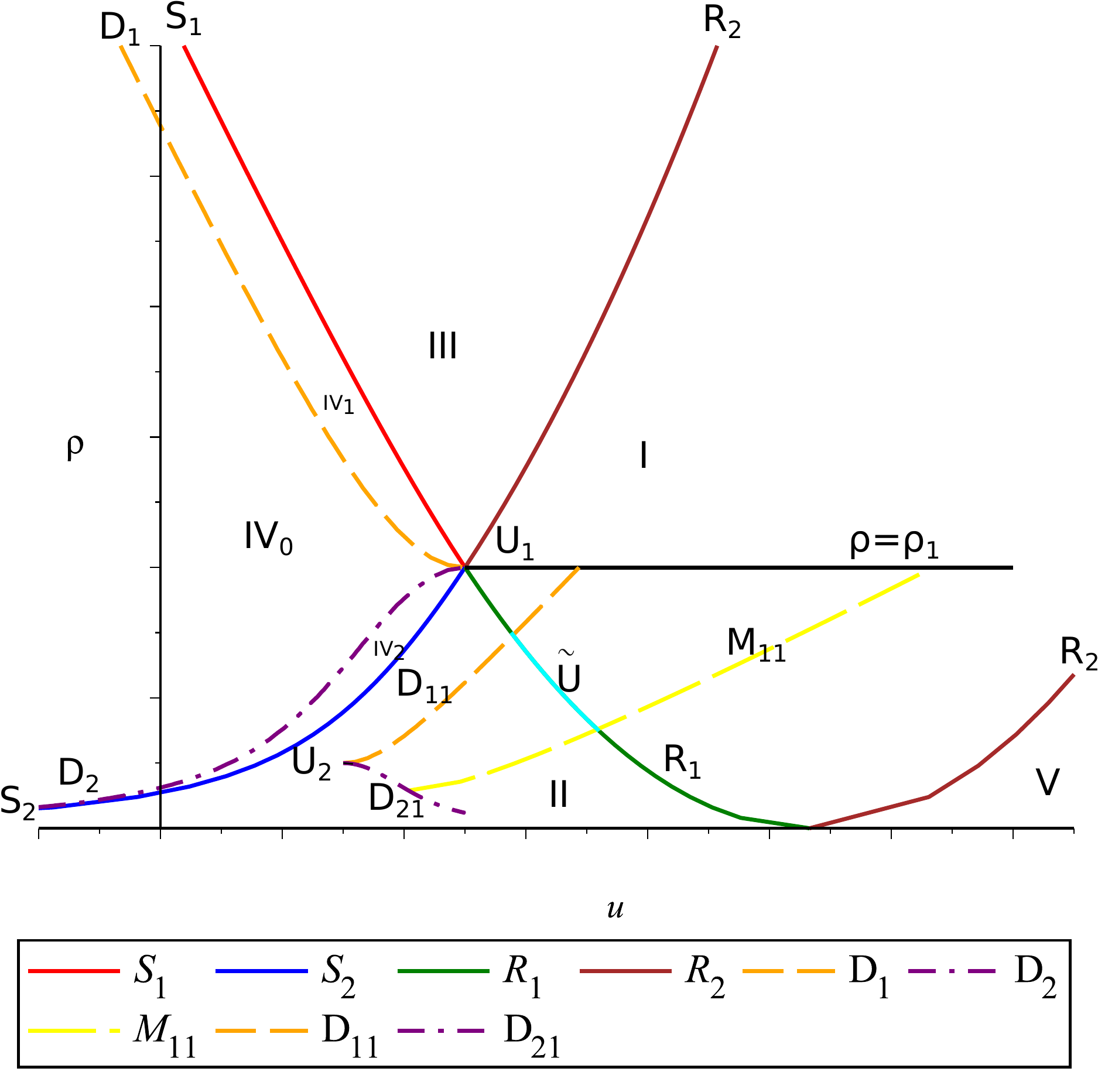}
\caption{$U_{2}$ in region $\mathrm{IV}_2\cup \mathrm{II}$: a 1-rarefaction wave followed by a delta shock. }\label{fig41}
\end{figure}

For $\widetilde{U}=(u_m, \rho_{m})$ satisfying $x'(t)=\frac{[\rho u]+\sqrt{a}}{[\rho]}\geq u_m-c_m$, we find that
 \begin{equation}\label{2.98}\begin{split}
u_2+\sqrt{\gamma\rho_m^{\gamma-1}}-\sqrt{\frac{(\gamma+1)\rho_m^{\gamma} -\rho_{2}^{\gamma}}{\rho_{2}}}\leq u_m\leq u_2+\sqrt{\gamma\rho_m^{\gamma-1}} +\sqrt{\frac{(\gamma+1)\rho_m^{\gamma}-\rho_{2}^{\gamma}}{\rho_{2}}}
\end{split} \end{equation}
if $\frac{\rho_2}{(\gamma+1)^{1/\gamma}}\leq\rho_m\leq\rho_2;$ and
 \begin{equation}\label{2.9871}\begin{split}
u_2 \leq u_m \leq u_2+\sqrt{\gamma\rho_m^{\gamma-1}} +\sqrt{\frac{(\gamma+1)\rho_m^{\gamma} -\rho_{2}^{\gamma}}{\rho_{2}}} \end{split} \end{equation}
if  $\rho_m\geq\rho_2$.
The entropy condition \eqref{2.66} requires that
  \begin{equation}\label{2.981}\begin{split}
u_m \geq u_2+\sqrt{\frac{\rho_m^{\gamma} -\rho_{2}^{\gamma}}{\rho_{2}}}, \qquad\text{for}~~ \rho_m\geq\rho_2,
\end{split} \end{equation}
and
 \begin{equation}\label{2.987}\begin{split}
u_m \geq u_2+\sqrt{\frac{\rho_2^{\gamma} -\rho_{m}^{\gamma}}{\rho_{m}}}, \qquad\text{for}~~ \rho_m\leq\rho_2. \end{split} \end{equation}
Hence we define the curves
\begin{align*}
  M_{11}&\doteq\left\{(u,\rho)~|~u=u_2+\sqrt{\gamma\rho^{\gamma-1}}+\sqrt{\frac{(\gamma+1)\rho^{\gamma}-\rho_{2}^{\gamma}}{\rho_{2}}}, \ \rho \geq \frac{\rho_2}{(\gamma+1)^{1/\gamma}}\right\},\\
  M_{21}&\doteq\left\{(u,\rho)~|~ u=u_2+\sqrt{\gamma\rho^{\gamma-1}}-\sqrt{\frac{(\gamma+1)\rho^{\gamma} -\rho_{2}^{\gamma}}{\rho_{2}}}, \  \frac{\rho_2}{(\gamma+1)^{1/\gamma}}\le\rho\leq\rho_2\right\},\\
  D_{21}&\doteq\left\{(u,\rho)~|~ u= u_2+\sqrt{\frac{\rho_2^{\gamma} -\rho^{\gamma}}{\rho}}, \ \rho\leq\rho_2\right\}, \\
  D_{11}&\doteq\left\{(u,\rho)~|~u= u_2+\sqrt{\frac{\rho^{\gamma} -\rho_{2}^{\gamma}}{\rho_2}}, \ \rho_1>\rho\geq\rho_2\right\}.
\end{align*}
Similar to \eqref{2.998}, one can show that $M_{21}$ is on the left-hand-side of $D_{21}$, hence not drawn in Figure \ref{fig41}.

Therefore,  for given $U_1$ and any $U_{2}\in\mathrm{IV}_2\cup \mathrm{II}$, $U_1$ can be connected to $\widetilde{U}=(u_m,\ \rho_{m})\in R_1(U_1)\cap Q_2$ by a $1$-rarefaction wave, while $\widetilde{U}$ is connected to $U_2$ by a delta shock satisfying $x'(t)\geq \lambda_{1}(\widetilde{U})$ and the entropy condition \eqref{2.66}, where
\begin{align}\label{2.9888} Q_2=&\Bigg\{(u,\rho)|u_2+\sqrt{\frac{\rho^{\gamma} -\rho_{2}^{\gamma}}{\rho_{2}}}\leq u\leq u_2+\sqrt{\gamma\rho^{\gamma-1}} +\sqrt{\frac{(\gamma+1)\rho^{\gamma} -\rho_{2}^{\gamma}}{\rho_{2}}},~\rho_2\leq\rho\nonumber\\ &\leq\rho_1\Bigg\}
\bigcup  \Bigg\{(u,\rho)|\ u_2+\sqrt{\frac{\rho_{2}^{\gamma}-\rho^{\gamma}}{\rho}}\leq u\leq u_2+\sqrt{\gamma\rho^{\gamma-1}} +\sqrt{\frac{(\gamma+1)\rho^{\gamma} -\rho_{2}^{\gamma}}{\rho_{2}}},\nonumber\\
&\qquad\frac{\rho_2}{(\gamma+1)^{1/\gamma}}\leq\rho\leq\rho_2\Bigg\}. \end{align}
Thus, as shown in Figure \ref{fig41}, there are infinite $\widetilde{U}$ on $R_1(U_1)$, lying in the region bounded by $M_{11}$, $D_{11}$, $D_{21}$ and $\{\rho=\rho_1\}$, and fulfilling the requirements.  This proves item 3).

We turn to the proof of item 4). Let %the case when
$U_2$ lie in the region $\mathrm{I}$.
Proposition \ref{prp1} indicates that
$U_1$ and $U_2$ cannot be connected by a single delta shock satisfying \eqref{2.66}. We claim that there is neither a solution of type $\delta+R_2$, nor  of type $R_1+\delta$. %consisted of a delta shock connecting $U_1$ to an intermediate constant state $\overline{U}$, and a $R_2$ wave connecting $\overline{U}$ and $U_2$, written shortly as $\delta+R_2$; nor a solution of type $R_1+\delta$. % {\red Without loss of generality, %The proofs of these two cases are similar, so
We prove below that $\delta+R_2$ is impossible. Then by symmetry, the case $R_1+\delta$ is also impossible.

Since the 2-rarefaction wave curves cover the region $\mathrm{I}$ and are not intersecting,  by the fact that if $U'\in R_2(U'')$, then $U''\in R^{*}_2(U')$, it is enough to show that
%In fact, we just need to show that %the extension of
the curve $R_2^{*}(U_1)$ %to the part $\rho<\rho_1$ is located
locates below $D_2$. % in view of the facts that each $R_2$ curve through the points in $\mathrm{I}$ locating on the right hand side of $R_1(U_1)$, and the fact that if $U'\in R_2(U'')$, then $U''\in R_2(U')$, too.
%Recalling that the curve $D_2$ is given by
%$ \rho(u-u_{1})^{2}=\rho_{1}^{\gamma}-\rho^{\gamma}$ for $\rho<\rho_{1},$ and $R_2^{*}(U_1)$ is determined by
%The extension of $R_2$ to the left of $U_{1}$ satisfies
%$$\big(u-u_{1}\big)^{2}=\left(\int_{\rho_{1}}^{\rho}\frac{\sqrt{p'(s)}}{s}\mathrm{d} s\right)^{2}=\frac{4\gamma}{(\gamma-1)^{2}}
%\Big(\rho^{\frac{\gamma-1}{2}}-\rho_1^{\frac{\gamma-1}{2}}\Big)^{2},$$ where $\rho<\rho_{1},$
%to see the relative location of $R_2^{*}(U_1)$ and $D_2$, set $$f(\rho)%=\rho_{1}^{\gamma}-\rho^{\gamma}-\rho(\int_{\rho_{1}}^{\rho}\frac{\sqrt{p'(s)}}{s}\mathrm{d} s)^{2}
% =\rho_{1}^{\gamma}-\rho^{\gamma}-\frac{4\gamma\rho}{(\gamma-1)^{2}}\Big(\rho^{\frac{\gamma-1}{2}} -\rho_1^{\frac{\gamma-1}{2}}\Big)^{2}.$$
%We just need to judge the sign of $f(\rho)$ for $\rho<\rho_1$.
%Obviously, $f(\rho_1)=0,$ and
% $$f'(\rho)=-\frac{\gamma}{(\gamma-1)^{2}}\Big((\gamma +1)\rho^{\frac{\gamma-1}{2}} -2\rho_1^{\frac{\gamma-1}{2}}\Big)^{2}\leq 0,\quad \mbox{for}~ \rho\leq \rho_1.$$
%The equality holds if and only if $\rho=\rho_{1}.$
% Thus, $f(\rho)>0$ for $\rho<\rho_1$. Recalling that $u-u_1<0$ along $D_2$ and $R_2^*(U_1)$, this implies that $u(\rho)|_{D_2}<u(\rho)|_{R_2^{*}(U_1)}$. Therefore,
% And $f(\rho_{m})\geq 0$ implies for the same $\rho_{m}$, $[u]_{D_2}^{2} \geq [u]_{R_2}^{2}$, which indicate
To demonstrate this, noticing that similar to the verification of \eqref{eq49new}, where we showed that $S_{22}$ is on the right-hand-side of $R_2$, we know that
$R_2^{*}(U_1)$ lies below $S_2$, as shown in Figure \ref{fig7}. Then recalling that we have shown that $D_2$ locates above $S_2$, we have the desired conclusion. %\vspace{-0.5cm}
\begin{figure}[t]
\centering
  \includegraphics[width=0.6\textwidth]{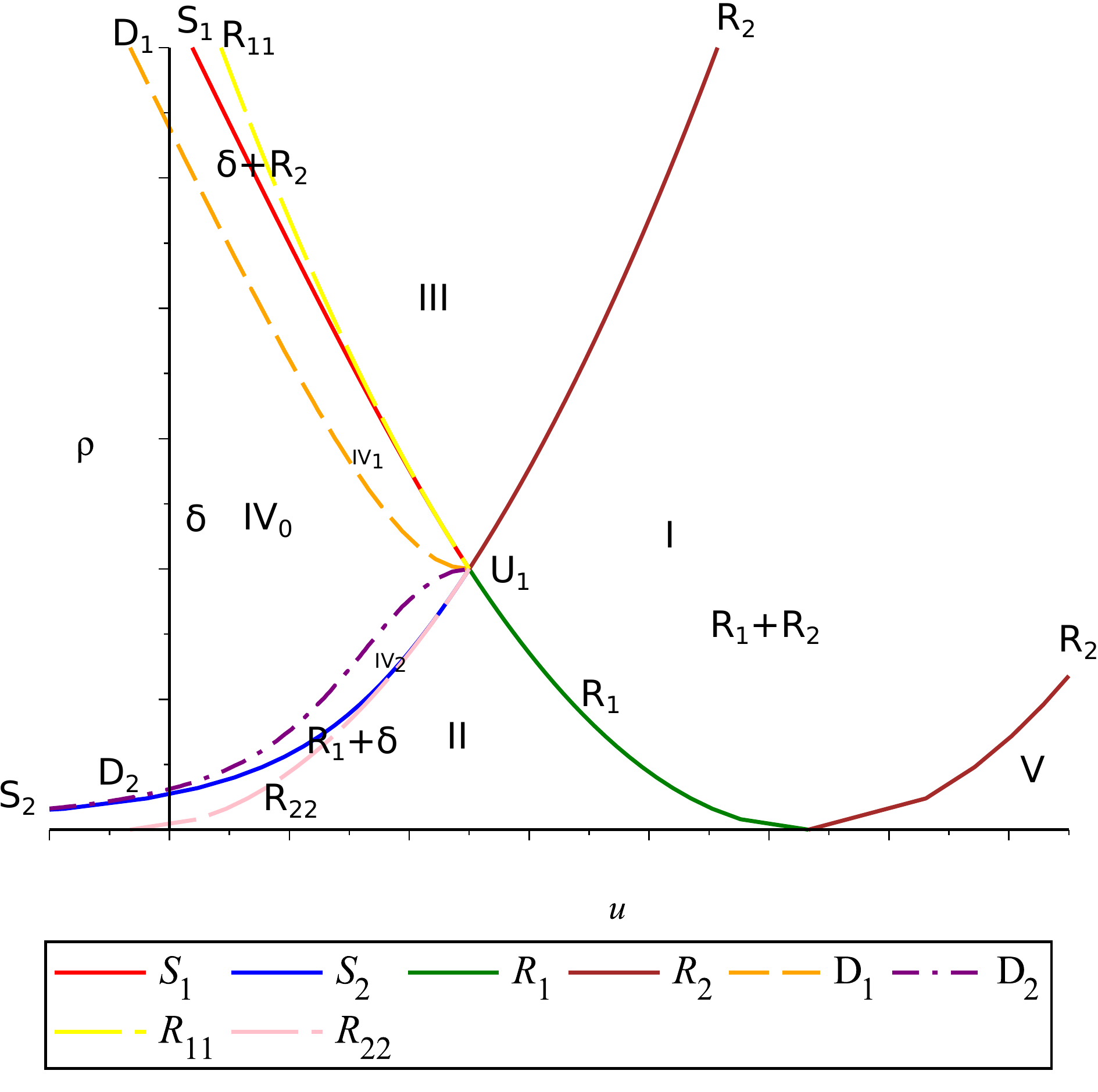}
\caption{Solutions to Riemann problem for $\rho_{0}=0$.}\label{fig7}
\end{figure}

Obviously, for $U_2$ in region $\mathrm{V}$, there is also no solution that contains a delta shock satisfying the over-compressing entropy condition (cf. the proof of item 3) above). The proof of item 4) is completed.\qed
\end{proof}
\vspace{-0.7cm}
\begin{remark}\label{rem4}
We summarize the new solutions to \eqref{1.1}\eqref{1.01} and  compare them   with the classical Riemann solutions in Table \ref{tab1}. However, back to item 1) of Theorem \ref{thm3.1}, even for $U_{2}\in \mathrm{IV}_0$, if we discard the principle of finding solutions with minimal number of waves,  it is easy to see that there are infinite solutions consisting of $\delta+R_{2}$, $R_{1}+\delta$, or two delta shocks, which {\it  do  fulfill} the over-compressing entropy condition.\qed

%\vspace{-0.3cm}
\begin{table}[t]
\centering{\small
\caption{Solutions to \eqref{1.1}\eqref{1.01} for given left-state $U_1$ and variable right-state $U_2$.}\label{tab1}
  \begin{tabular}{|p{2.7em}|p{2.3em}|p{12em}|p{14em}|}
%\hline
\hline
\multicolumn{2}{|l|}{$U_2$ }  &New solutions & Classical Riemann solvers\\
 \hline
 \multicolumn{2}{|l|}{{in region $\mathrm{IV}_{0}$}}& a single delta shock ($\delta$) & $S_{1}+S_{2}$ \\
\hline
\multicolumn{2}{|l|}{{in region $\mathrm{IV}_{1}$}}& $\delta+R_{2}$ &  $S_{1}+S_{2}$  \\
\hline
\multicolumn{2}{|l|}{{in region $\mathrm{IV}_{2}$}}& $R_{1}+\delta$ & $S_{1}+S_{2}$\\
\hline
\multicolumn{2}{|l|}{{in region $\mathrm{III}$}}& $\delta+R_{2}$ & $S_{1}+R_{2}$\\
\hline
\multicolumn{2}{|l|}{{in region $\mathrm{II}$}}& $R_{1}+\delta$ & $R_{1}+S_{2}$\\
\hline
\multicolumn{2}{|l|}{{in region $\mathrm{I}$}}& none & $R_{1}+R_{2}$\\
\hline
 \multicolumn{2}{|l|}{{in region $\mathrm{V}$}}& none & $R_{1}+\mathrm{Vacuum}+R_{2}$\\
\hline
\end{tabular}}
\end{table}
\end{remark}
\vspace{-0.5cm}
\subsection{The singular Riemann problems}\label{s3.2}
Recall that for given initial data \eqref{1.2}, we have set $a\doteq [\rho u]^{2}-[\rho][\rho u^{2}+p]=\rho_{1}\rho_{2}[u]^{2}-[\rho][p],~b\doteq [\rho]u_{0}-[\rho u]. $
\begin{theorem}\label{thm4.1}
  For the generalized Riemann problem \eqref{1.1}\eqref{1.2} with $\rho_0>0$, we have the following conclusions about the existence of solutions:
   \begin{itemize}
   \item[1)] if the initial data satisfy one of the following: ~i)~ $a>0,~ b>-\sqrt{a}$; ~ii)~ $a=0, ~ b\geq0$; ~iii)~ $[\rho]\neq0,~ a>0,~ b=-\sqrt{a}$, then there is a unique global solution that the left and right states can be connected by a delta shock;
   \item[2)] if the initial data satisfy one of the following: ~i)~ $a>0, ~b<-\sqrt{a}$; ~ii)~ $a\leq 0,~ b<0$; ~iii)~ $a<0,~ b\geq0$, then there is a solution consisting of a single delta shock for $t\in[0,\ t^{*}]$ and two waves for $t>t^{*}$;
   \item[3)] if the initial data satisfy $[\rho]=0,~ a>0,~ b=-\sqrt{a}$, then there is only a local solution consisting of a single delta shock for $t\in[0,\, t^{*}]$. In addition, $\lim_{t\rightarrow t^*-}x(t)=+\infty$ (resp., $-\infty)$ if $u_0\ge\frac{u_1+u_2}{2}$ (resp., $u_0<\frac{u_1+u_2}{2}$).
%    \item[1)] if $U_{2}\in \mathrm{IV}_0$ and $u_0\in [u_2,u_1]$, then there is a unique global solution that the left and right states can be connected by a delta shock.
%    \item[2)] if $U_{2}\in \mathrm{IV}_1\cup \mathrm{III}$, then there are infinitely global solutions. They are consisted of the same delta shock for $t\in[0,t^*]$ and different combinations of a delta shock and a rarefaction wave $R_2$ for $t>t^*$.
%    \item[3)] if $U_{2}\in \mathrm{IV}_2\cup \mathrm{II}$, then there are infinitely global solution consisted of the same  delta shock for $t\in[0,t^*]$ and  different combination of a rarefaction wave $R_1$ and a delta shock  for $t>t^*$.
%    \item[4)] if $U_{2}\in \mathrm{I}$, then there is a solution consisted of a single delta shock for $t\in[0,\ t^{*}]$ and
%two rarefaction waves for $t>t^{*}$.
 \end{itemize}
\end{theorem}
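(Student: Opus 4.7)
The plan is to derive the three cases by appealing to Lemma \ref{lem2.3} for the behavior of the delta shock on its maximal interval of existence, and, for item 2), by extending the local solution past the critical time $t^{*}$ via the classical Riemann theory supplied by Theorem \ref{thm3.1} and the Appendix. The decisive quantities are the sign of the discriminant $\Delta(t)=at^{2}+2\rho_{0}bt+\rho_{0}^{2}$ (which determines when the weight $w_{\rho}=\sqrt{\Delta}$ vanishes) and, at the vanishing time, the finiteness of $x(t^{*}-)$.

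For item 1), the three subconditions correspond precisely to the configurations in which $\Delta(t)\geq 0$ for every $t\geq 0$. Indeed, $a>0,\,b>-\sqrt{a}$ gives either $b^{2}<a$ (so $\Delta>0$ always) or $b\geq\sqrt{a}>0$ (so both roots of $\Delta$ are non-positive); $a=0,\,b\geq 0$ makes $\Delta=2\rho_{0}bt+\rho_{0}^{2}\geq\rho_{0}^{2}$; and $[\rho]\neq 0,\,a>0,\,b=-\sqrt{a}$ yields $\Delta(t)=a(t-\rho_{0}/\sqrt{a})^{2}\geq 0$, with the weight touching zero momentarily but the front $x(t)$ still extending globally as a $C^{0}$ curve (a short L'H\^opital computation shows that $x'(t^{*}\pm)=([\rho u]\pm\sqrt{a})/[\rho]$ is finite). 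In each case, the formulas of Lemma \ref{lem2.3} items 3) or 4) and \eqref{2.151}\eqref{2.152} assemble a unique global delta shock solution.

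For item 2), Lemma \ref{lem2.3} item 4) produces a local delta shock on $[0,t^{*}]$ with $w_{\rho}(t^{*}-)=0$, $x(t^{*}-)$ finite, and $x'(t^{*}-)=+\infty$. Since the delta shock is the only singularity up to $t^{*}$, the flow away from it is still the piecewise constant state $(U_{1},U_{2})$; once the concentrated mass disappears, what remains at $(x(t^{*}),t^{*})$ is an ordinary Riemann problem with initial data $(U_{1},U_{2})$, which by Theorem \ref{thm3.1} and the Appendix always admits a resolution by at most two elementary waves. Concatenating this fan with the local delta shock gives the claimed global solution. For item 3), $[\rho]=0,\,a>0,\,b=-\sqrt{a}$ is equivalent to $[u]>0$ (since $\sqrt{a}=\rho_{1}|[u]|$ and $b=-\rho_{1}[u]$ when $[\rho]=0$), so Lemma \ref{lem2.3} item 3) gives $t^{*}=\rho_{0}/(\rho_{1}[u])$, $w_{\rho}(t^{*}-)=0$, and $x(t^{*}-)=\pm\infty$ according as $u_{0}\gtrless(u_{1}+u_{2})/2$; because the front has escaped to spatial infinity, no Riemann problem can be posed at the junction, and the solution is necessarily only local.

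The main obstacle is justifying the gluing in item 2): one must verify that the concatenation satisfies the weak formulation \eqref{2.3}\eqref{2.4} of Definition \ref{def2.1} globally, rather than only on the slabs $\{t\leq t^{*}\}$ and $\{t\geq t^{*}\}$ separately. The reason this goes through is that $w_{\rho}(t^{*})=0$ leaves no residual Dirac mass at the junction, so for every test function $\varphi\in C_{c}^{1}(\mathbb{R}^{2})$ the integration by parts producing \eqref{2.3}\eqref{2.4} contributes no interface term at $t=t^{*}$, and the two slab identities add up to the global one. The classical Lax condition is imposed regionwise on the smooth parts and thus passes through automatically.
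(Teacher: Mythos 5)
Your proposal is correct and follows essentially the same route as the paper: all three items are read off from Lemma \ref{lem2.3} via the sign of $\Delta(t)=at^{2}+2\rho_{0}bt+\rho_{0}^{2}$, with item 2) continued past $t^{*}$ by the Riemann solution posed at $(x(t^{*}),t^{*})$ and item 3) terminating because $x(t^{*}-)=\pm\infty$. The extra details you supply --- the finite one-sided speeds $x'(t^{*}\pm)=([\rho u]\pm\sqrt{a})/[\rho]$ in the borderline case $b=-\sqrt{a}$, $[\rho]\neq 0$, and the observation that $w_{\rho}(t^{*})=0$ leaves no interface term in the weak formulation when gluing at $t=t^{*}$ --- are points the paper leaves implicit rather than a different method.
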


The proof of item 1) follows immediately from Lemma \ref{lem2.3}. While the proof of item 2) follows from Lemma \ref{lem2.3} for the existence of a delta shock front for $t\in(0,t^*)$. We have $w_\rho(t^*)=0$. The problem for $t\geq t^*$ is reduced to a classical Riemann problem.  % Proposition \ref{prp1}
%and Theorem \ref{thm3.1}.  For the items 2)-4), at the moment
One can  get the solution for $t\geq t^*$  by Theorem \ref{thm3.1}.
For item 3), by Lemma \ref{lem2.3}, we have $\lim_{t\rightarrow t^*-}x(t)=\pm\infty$, hence the existence of the solution is local in time.

Proposition \ref{prp2} shows that it is impossible to construct a delta shock solution with delta shock front separating  piecewise constant states and satisfying the over-compressing entropy condition for all the time. To get a global solution that satisfies \eqref{2.66}, %since near the delta shock front the flow could not be uniform,
  one needs to study a problem such that on at least one side of the delta shock, the flow is not uniform, and
  interactions of rarefaction waves and delta shocks may be involved. This will be reported in other works.
% the delta shock originating from the origin point disappears, one gets the solution for the part $t>t^*$ by solving a classical Riemann problem as discussed in section \ref{s3.1}. %The results can be summarized in the following Table \ref{tab2}.
%{Notice that for $\rho_0>0$, the velocity $u_0$ affects the global structure of the solutions rather than the local structure if removing the restriction of the over-compressibility condition \eqref{2.66}.

%
%\begin{table}[htbp]
%\centering{\small
%\caption{$\rho_0>0$, solution of \eqref{1.1}\eqref{1.2}}\label{tab2}
%  \begin{tabular}{|p{2.7em}|p{3em}|p{14em}|}
%%\hline
%\hline
%\multicolumn{2}{|l|}{{$(u_2,\rho_2)$}} &global solutions \\
% \hline
% \multicolumn{2}{|l|}{{in region $\mathrm{IV}_{0}$}}& unique global single $\delta$ shock  \\
%\hline
%\multicolumn{2}{|l|}{{ in region $\mathrm{IV}_{1}$}}& local $\delta$ wave and then $\delta+R_{2}$   \\
%\hline
%\multicolumn{2}{|l|}{{ in region $\mathrm{IV}_{2}$}}& local $\delta$ wave and then $R_{1}+\delta$ \\
%\hline
%\multicolumn{2}{|l|}{{ in region $\mathrm{III}$}}& local $\delta$ wave and then $\delta+R_{2}$ \\
%\hline
%\multicolumn{2}{|l|}{{ in region $\mathrm{II}$}}& local $\delta$ wave and then $R_{1}+\delta$\\
%\hline
%\multicolumn{2}{|l|}{{ in region $\mathrm{I}$}}& local $\delta$ wave and then $R_{1}+R_{2}$ \\
%\hline
%\end{tabular}}
%\end{table}%
%\end{remark}

%\vspace{-0.4cm}
\appendix
\section{Wave curves and classical solutions of Riemann problems}

In this appendix, we recall some fundamental results on wave curves and entropy weak solutions to Riemann problems for system \eqref{1.1}, with the initial conditions
\begin{equation}\label{3.1}
(u, \rho)|_{t=0}=\left \{
\begin{split}
&(u_1, \rho_1), \ \ \ \text{if}~x<0, \\
&(u_2, \rho_2), \ \ \ \text{if}~x>0. \\
\end{split} \right.
\end{equation}
The complete analysis and results could be found in \cite[Chapter 2]{Chang1989The}.
It is well-known that \eqref{1.1} has two real eigenvalues
\begin{equation}\label{3.4}\lambda_{1}=u-c, \ \ \ \lambda_{2}=u+c \end{equation}
when $\rho>0,$  where $c=( p'(\rho))^{\frac{1}{2}}.$ So it is strictly hyperbolic for polytropic gases without vacuum. The associated right eigenvectors are
$$\textbf{r}_{1}=\begin{pmatrix}-\rho, & c\end{pmatrix}^{\top}, \ \ \ \textbf{r}_{2}=\begin{pmatrix}\rho,&c \end{pmatrix}^{\top}. $$
Moreover, $\nabla_{(u,\rho)}\lambda_{i}\cdot \textbf{r}_{i}\neq 0$ if $p''(\rho)>0, i=1, 2.$ Hence both the characteristics are genuinely nonlinear for non-vacuum polytropic gases with $\gamma>1$.

For the shock wave curves in the $(u,\rho)$-plane, with $\sigma$ being the speed of shocks, considering the Rankine-Hugoniot conditions
 \begin{equation}\label{3.5}
 \sigma [\rho]=[\rho u],\qquad
 \sigma [\rho u]=[\rho u^{2}+p(\rho)],\end{equation}
which yield
\begin{equation}\label{3.6}
u-u_{1}=\pm \sqrt{\frac{(\rho-\rho_{1})(p-p_{1})}{\rho \rho_{1}}}.
\end{equation}
According to the Lax condition, or equivalently, density increases across shock front \cite[Section 18.B, p.349]{smoller1994shock}, for 1-shocks, we have $\rho>\rho_{1}.$ Since $\sigma<0,$ the first equation in \eqref{3.5} implies that $u<u_{1}.$ Thus
\begin{equation}\label{3.11}
S_{1}\doteq\left\{(u,\rho)~|~  u-u_{1}=-\sqrt{\frac{(\rho-\rho_{1})(p-p_{1})}{\rho \rho_{1}}}, \ \ \ \rho>\rho_{1}\right\}.
\end{equation}
Similar analysis shows that
\begin{equation}\label{3.12}
S_{2}\doteq\left\{(u,\rho)~|~ u-u_{1}=-\sqrt{\frac{(\rho-\rho_{1})(p-p_{1})}{\rho \rho_{1}}}, \ \ \ \rho<\rho_{1}\right\}.
\end{equation}

For the rarefaction wave curves,  set $\xi\doteq\frac{x}{t}.$ Then system \eqref{1.1} is reduced to the ordinary differential equations
\begin{equation}\label{3.14}
\left \{
\begin{split}
&(u-\xi)\rho_{\xi}+\rho u_{\xi}=0,\\
&p'(\rho)\rho_{\xi}+\rho (u-\xi)u_{\xi}=0.\\
\end{split} \right.
\end{equation}
Which implies that
\begin{equation}\label{3.15}
\xi=\lambda_{1}=u-c \quad \text{or} \quad  \xi=\lambda_{2}=u+c.
\end{equation}
For 1-waves, from the first equation in \eqref{3.14}, we have
\begin{equation}\label{3.16} \frac{\mathrm{d} u}{\mathrm{d} \rho}=-\frac{c}{\rho}<0. \end{equation}
Integrating both sides of \eqref{3.16} gives
\begin{equation}\label{3.17}
R_{1}\doteq\left\{(u,\rho)~|~ u-u_{1}=-\int_{\rho_{1}}^{\rho}\frac{\sqrt{p'(s)}}{s}\mathrm{d} s, \ \ \  \rho<\rho_{1}\right\}. \end{equation}
Similarly the 2-rarefaction wave curve is
\begin{equation}\label{3.18} R_{2}\doteq\left\{(u,\rho)~|~ u-u_{1}=\int_{\rho_{1}}^{\rho}\frac{\sqrt{p'(s)}}{s}\mathrm{d} s, \ \ \  \rho>\rho_{1}\right\}. \end{equation}

For the Riemann problem \eqref{1.1}\eqref{3.1} with given left state $(u_{1}, \rho_{1})$, if the right state  $(u_{2}, \rho_{2})$ lies on any of the above four curves, then $(u_{1}, \rho_{1})$ and $(u_{2}, \rho_{2})$ can be connected by a single shock wave or rarefaction wave as indicated by the name of the wave curves above. If $(u_{2}, \rho_{2})$ lies in one of the four open regions $\mathrm{I},\ \mathrm{II},\ \mathrm{III}$ or $\mathrm{IV}$ as depicted in Figure \ref{fig8}, then the solution contains two waves. To be more specific, it is $R_1+R_2$ (two rarefaction waves) if $(u_2,\rho_2)\in \mathrm{I}$; $R_1+S_2$ (a rarefaction wave followed by a shock) if $(u_2,\rho_2)\in \mathrm{II}$; $S_1+R_2$ (a shock followed by a rarefaction wave) if $(u_2,\rho_2)\in \mathrm{III}$; and $S_1+S_2$ (two shocks) if $(u_2,\rho_2)\in \mathrm{IV}$. States in region $\mathrm{V}$ can only be connected to $(u_1,\rho_1)$ by a vacuum lying in the middle of 1- and 2-rarefaction waves.

In Section \ref{s3.1}, for the proof of Theorem \ref{thm3.1}, we utilized the inverse rarefaction wave curves. $R_1^{*}(U_1)$ is exactly the curve given by \begin{align}\label{eq69}\left\{(u,\rho)~|~u-u_{1}=-\int_{\rho_{1}}^{\rho}\frac{\sqrt{p'(s)}}{s}\mathrm{d} s, \ \ \  \rho>\rho_{1}\right\},\end{align} which consists of {\it left state} $U$ that could be connected to the {\it right state} $U_1$ by a 1-rarefaction wave. Similarly, $U$ on the curve
\begin{align}R_2^{*}(U_1)\doteq\left\{(u,\rho)~|~ \label{eq70}u-u_{1}=\int_{\rho_{1}}^{\rho}\frac{\sqrt{p'(s)}}{s}\mathrm{d} s, \ \ \  \rho<\rho_{1}\right\}\end{align}
could be connected from right by a 2-rarefaction wave to the right state $U_1$.
%\vspace{-3.5cm}
\begin{figure}[t]
\centering
  \includegraphics[width=0.5\textwidth]{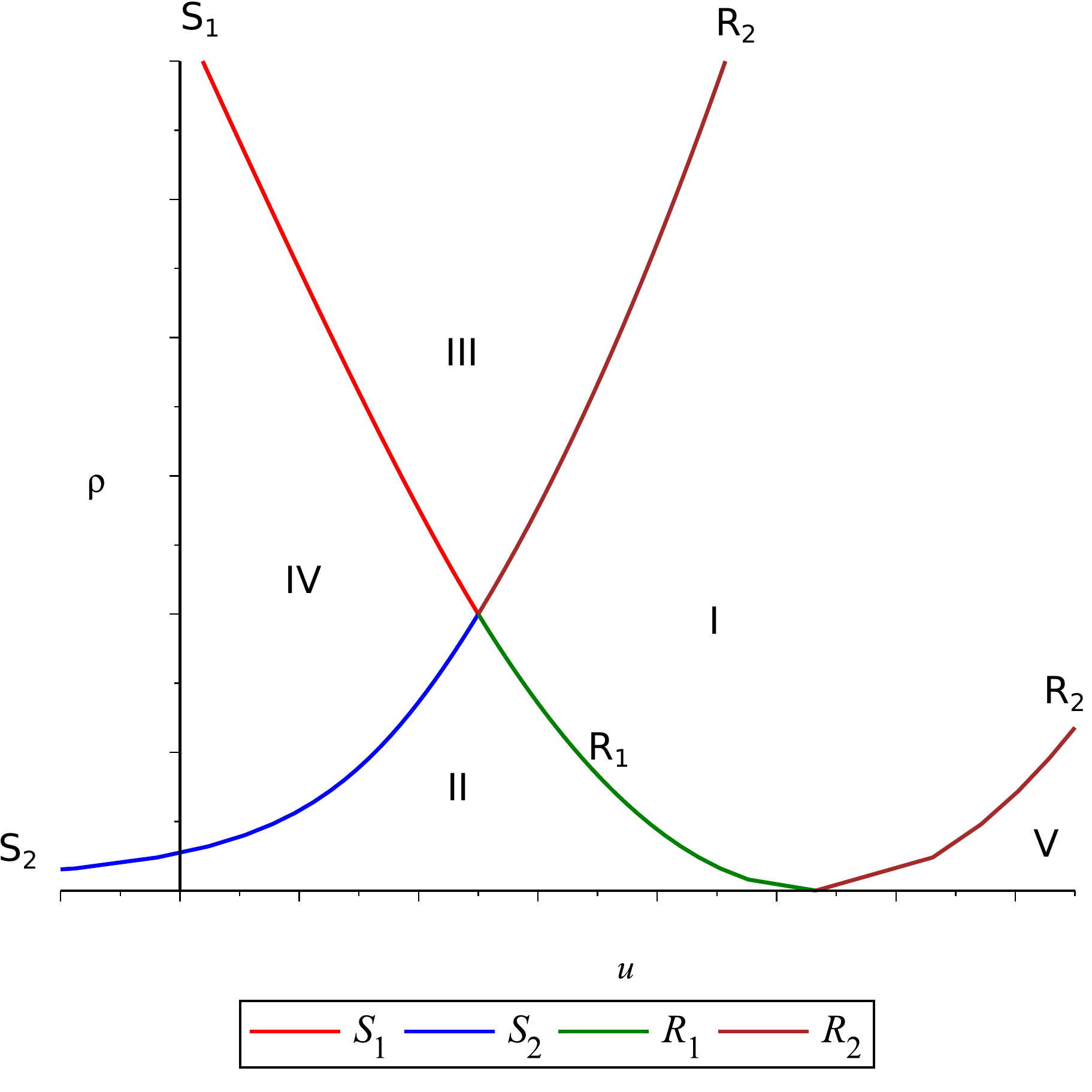}
\caption{\small Wave curves of Riemann problem \eqref{1.1}\eqref{3.1} in the $(u,\rho)$-plane.}\label{fig8}
\end{figure}

\iffalse
% For one-column wide figures use
\begin{figure}
% Use the relevant command to insert your figure file.
% For example, with the graphicx package use
  \includegraphics{}
% figure caption is below the figure
\caption{Please write your figure caption here}
\label{fig:1}       % Give a unique label
\end{figure}
%
% For two-column wide figures use
\begin{figure*}
% Use the relevant command to insert your figure file.
% For example, with the graphicx package use
  \includegraphics[width=0.75\textwidth]{example.eps}
% figure caption is below the figure
\caption{Please write your figure caption here}
\label{fig:2}       % Give a unique label
\end{figure*}
%
% For tables use
\begin{table}
% table caption is above the table
\caption{Please write your table caption here}
\label{tab:1}       % Give a unique label
% For LaTeX tables use
\begin{tabular}{lll}
\hline\noalign{\smallskip}
first & second & third  \\
\noalign{\smallskip}\hline\noalign{\smallskip}
number & number & number \\
number & number & number \\
\noalign{\smallskip}\hline
\end{tabular}
\end{table}
\fi
%\vspace{-1cm}
\begin{acknowledgements}

The authors are grateful to Professor Jiequan Li, for his valuable comments on a draft of this paper, and particularly the observation on connections between delta shocks and free pistons, in a private conversation. Aifang Qu appreciates very much the support and the hospitality of the IMS,  during her visit at the Chinese University of Hong Kong.

\end{acknowledgements}

% Authors must disclose all relationships or interests that
% could have direct or potential influence or impart bias on
% the work:
%
 \section*{Conflict of interest}
The authors declare that they have no conflict of interest.

% BibTeX users please use one of
%\bibliographystyle{spbasic}      % basic style, author-year citations
%\bibliographystyle{spmpsci}      % mathematics and physical sciences
%\bibliographystyle{spphys}       % APS-like style for physics
%\bibliography{}   % name your BibTeX data base

% Non-BibTeX users please use

\end{document}